\newtheorem{theorem}{Theorem}[section]
\numberwithin{equation}{theorem}
\newtheorem{lemma}[theorem]{Lemma}
\newtheorem{proposition}[theorem]{Proposition}
\newtheorem{corollary}[theorem]{Corollary}
\theoremstyle{definition}
\newtheorem{example}[theorem]{Example}
\theoremstyle{conjecture}
\newtheorem{question}[theorem]{Question}
\newcommand{\Ass}{\operatorname{Ass}}
\newcommand{\Min}{\operatorname{Min}}
\newcommand{\Spec}{\operatorname{Spec}}
\newcommand{\Ht}{\operatorname{ht}}
\newcommand{\Ann}{\operatorname{Ann}}
\newcommand{\Nil}{\operatorname{Nil}}
\newcommand{\Max}{\operatorname{Max}}
\newcommand{\lo}{\longrightarrow}
\newcommand{\fm}{\frak{m}}
\newcommand{\fp}{\frak{p}}
\newcommand{\fq}{\frak{q}}
\newcommand{\fr}{\frak{r}}
\newcommand{\fn}{\frak{n}}
\newcommand{\suchthat}{\;\ifnum\currentgrouptype=16 \middle\fi|\;}
\newcommand{\holim@}[2]{%
\vtop{\m@th\ialign{##\cr
\hfil$#1\operator@font holim$\hfil\cr
\noalign{\nointerlineskip\kern1.5\ex@}#2\cr
\noalign{\nointerlineskip\kern-\ex@}\cr}}%
}
\newcommand{\holim}{%
\mathop{\mathpalette\holim@{\rightarrowfill@\textstyle}}\nmlimits@
}
\def\@secnumfont{\bfseries}
\def\section{\@startsection{section}{1}%
\z@{.7\linespacing\@plus\linespacing}{.5\linespacing}%
{\normalfont\Large\bfseries\filcenter}}
\def\subsection{\@startsection{subsection}{2}%
\z@{.5\linespacing\@plus.7\linespacing}{-.5em}%
{\normalfont\large\bfseries}}
\begin{document}
\title[Characterization and examples of ...]
{Characterization and examples of commutative isoartinian rings}

\author[A. Daneshvar and K. Divaani-Aazar]
{Asghar Daneshvar and Kamran Divaani-Aazar}
\address{A. Daneshvar, Department of Mathematics, Faculty of Mathematical Sciences, Alzahra University, Tehran, Iran.}
\email{a.daneshvar@alzahra.ac.ir, a.daneshvar@ipm.ir}

\address{K. Divaani-Aazar, Department of Mathematics, Faculty of Mathematical Sciences, Alzahra
University, Tehran, Iran.}
\email{kdivaani@ipm.ir}

\begin{abstract}
	Noetherian rings have played a fundamental role in commutative algebra, algebraic number theory, and algebraic geometry. Along
	with their dual, Artinian rings, they have many generalizations, including the notions of isonoetherian and isoartinian rings.
	In this paper, we prove that the Krull dimension of every isoartinian ring is at most one. We then use this result to provide
	a characterization of isoartinian rings. Specifically, we prove that a ring $R$ is isoartinian if and only if $R$ is uniquely
	isomorphic to the direct product of a finite number of rings of the following types:
	(i) Artinian local rings;
	(ii) non-Noetherian isoartinian local rings with a nilpotent maximal ideal;
	(iii) non-field principal ideal domains;
	(iv) Noetherian isoartinian rings $A$ with $\Min A$ being a singleton and $\Min A \subsetneq \Ass A$;
	(v) non-Noetherian isoartinian rings $A$ with $\Min A$ being a singleton and $\Min A \subsetneq \Ass A$;
	(vi) non-Noetherian isoartinian rings $A$ with a unique element in $\Min A$ that is not maximal, and $\Min A=\Ass A$.
	Several examples of these types of rings are also provided.
	
	\vspace{0.4cm}

	\noindent 2020 {\it Mathematics Subject Classification}. 13C05; 16P20; 13E10; 16P60; 13E05.
	
	\noindent {\it Keywords}. Dedekind ring; isoartinian ring; Marot ring; perfect ring; principal ideal domain; Pr\"ufer ring; subperfect ring.

\end{abstract}

\maketitle

\section{Introduction}

Throughout this article, the term ``ring" refers to commutative rings with nonzero identity.

In 1921, Emmy Noether showed out that in a ring $R$, every ideal of $R$ is finitely generated if and only if $R$ satisfies ACC on its ideals. Noether’s subsequent deep research had a significant impact
on module theory and representation theory. Assuming ACC on a ring leads to many beautiful properties. For instance, Emanuel Lasker and Noether demonstrated that every ideal of a ring satisfying ACC has a
primary decomposition, which is a generalization of the fundamental theorem of arithmetic. In 1927, Emil Artin began studying rings with both ACC and DCC properties, which led to several interesting applications of ACC property. It was clear from simple examples that ACC does not imply DCC, so Artin assumed both ACC and DCC. In honor of Noether and Artin, the rings satisfying ACC (respectively, DCC) are now referred to as Noetherian (respectively, Artinian) rings.

The method of decomposition is a common approach in mathematical research, which enables the transfer of properties from basic
building blocks to more complex objects and vice versa. This technique has been used for a long time, with the decomposition of
natural numbers into prime factors being one of the earliest examples. Algebra is no exception, and the fundamental theorem of
finitely generated Abelian groups is just one of many instances of this approach. Another famous example is the theorem attributed
to Akizuki-Cohen, which states that an Artinian ring is uniquely isomorphic to the direct product of a finite number of Artinian
local rings.

Due to the importance and wide range of applications of Noetherian and Artinian rings, many authors have proposed and examined some
generalizations of these concepts. One of the most natural generalizations was introduced by Alberto Facchini and Zahra Nazemian in
2016 \cite{FN3}. They define a ring $R$ to be isonoetherian (respectively, isoartinian) if every ascending (respectively, descending)
chain of ideals of $R$ terminates up to isomorphism. Several interesting results and examples pertaining to these types of rings can
be found in \cite{FN3}. To explore the topic of isonoetherian and isoartinian rings in more depth, we recommend consulting the following references: \cite{FN1, FN2, FN3, BDV1, BDV2}.

Numerous natural questions arise regarding isonoetherian and isoartinian rings. For instance, it is unknown whether an isoartinian
ring is necessarily isonoetherian. In \cite[Corollary 4.8]{FN3}, Facchini and Nazemian showed that a reduced isoartinian ring is
isomorphic to a finite direct product of principal ideal domains. In particular, every reduced isoartinian ring is Noetherian. The
aim of this paper is to establish a decomposition result for general isoartinian rings. We prove that:

\begin{theorem}\label{1.1}  A ring $R$ is isoartinian if and only if it is uniquely isomorphic to the direct product of a finite
number of the following types of rings with various choices:
\begin{enumerate}
\item[(i)] Artinian local rings.
\item[(ii)] non-Noetherian isoartinian local rings with nilpotent maximal ideal.
\item[(iii)] non-field principal ideal domains.
\item[(iv)] Noetherian isoartinian rings $A$ with a unique minimal prime ideal $\fp$ such that $A/\fp$ is a principal
ideal domain and $\Min A\varsubsetneq \Ass A$.
\item[(v)] non-Noetherian isoartinian rings $A$ with a unique minimal prime ideal $\fp$ such that $A/\fp$ is a principal ideal
domain and $\Min A\varsubsetneq \Ass A$.
\item[(vi)] non-Noetherian isoartinian rings $A$ with a unique minimal non-maximal prime ideal $\fp$ such that $A/\fp$ is a principal
ideal domain and $\Min A=\Ass A$.
\end{enumerate}
\end{theorem}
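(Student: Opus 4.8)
The plan is to prove both implications of the equivalence, the forward direction carrying the substance; throughout I use the earlier result that $\dim R\le 1$ for isoartinian $R$, together with the facts from \cite{FN3} that direct factors and quotients of isoartinian rings are isoartinian and that a \emph{reduced} isoartinian ring is a finite direct product of principal ideal domains \cite[Corollary~4.8]{FN3}. For the ``if'' part one checks that each listed type is isoartinian and then uses that a finite direct product of isoartinian rings is isoartinian: types (ii), (iv), (v), (vi) are isoartinian by definition; (i) because Artinian rings are trivially isoartinian; and (iii) because in a principal ideal domain every nonzero ideal is generated by a nonzerodivisor, hence isomorphic as a module to the ring itself, so any descending chain of ideals is constant up to isomorphism once a zero term occurs. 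For the uniqueness assertion, each of the six types is an \emph{indecomposable} ring, being either local or endowed with a unique minimal prime and hence irreducible; and a ring that is a finite direct product of indecomposable rings is so in only one way, its factors being the $eR$ for the finitely many primitive idempotents $e$ of $R$, which are intrinsic.

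For the ``only if'' part, let $R$ be isoartinian. Applying \cite[Corollary~4.8]{FN3} to the reduced isoartinian ring $R/\Nil R$ gives $R/\Nil R\cong\prod_{j=1}^{m}D_j$ with each $D_j$ a principal ideal domain; since $\Nil R$ is nil, its idempotents lift, so $R\cong\prod_{j=1}^{m}R_j$ with $R_j/\Nil R_j\cong D_j$. Each $A:=R_j$ is a direct factor of $R$, hence isoartinian with $\dim A\le 1$, and has a unique minimal prime $\fp:=\Nil A$ with $A/\fp\cong D_j$. If $D_j$ is a field, then $\fp$ is maximal, hence (being the unique minimal prime) the only prime, so $A$ is $0$-dimensional local: if $A$ is Noetherian it is Artinian local (type (i)), and otherwise it is type (ii) once $\fp$ is shown to be nilpotent. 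If $D_j$ is a non-field principal ideal domain, then $\dim A=1$, $\fp$ is the unique minimal prime and is non-maximal, and $A/\fp$ is a non-field principal ideal domain; according to whether $A$ is Noetherian and whether $\Min A=\Ass A$ or $\Min A\subsetneq\Ass A$ (one of these holds, since $\Min A\subseteq\Ass A$), $A$ is of type (iv), (v) or (vi) — the remaining case, $A$ Noetherian with $\Min A=\Ass A$ and $\fp\ne 0$, being the one that must be ruled out. Combined with the uniqueness already noted, this proves the theorem.

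Two points carry the weight, and the first is the main obstacle: showing that a $0$-dimensional non-Noetherian isoartinian local ring has \emph{nilpotent}, not merely nil, maximal ideal — equivalently, that the nilradical of an isoartinian ring is nilpotent. If $\fm$ is not nilpotent then $\fm^n/\fm^{n+1}\ne 0$ for all $n$, and isoartinianness forces these layers to share a common dimension $d\ge 1$ over $A/\fm$ for large $n$. The case $d=1$ is quick: some $\fm^N=(t)$ is cyclic, and then $A/\ann(t)$ is isomorphic as a module to its submodule $\fm\cdot\big(A/\ann(t)\big)$, forcing the maximal ideal of $A/\ann(t)$ — nonzero, as $\fm^{N+1}\ne 0$ — to be a free module of rank one over $A/\ann(t)$, which is impossible for a nonzero nil ideal; so $d\ge 2$. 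The general case is more delicate and should come from extracting, out of the unbounded-length quotients $\fm^N/\fm^{N+t}$, a descending chain of submodules no two of which are isomorphic (via a Krull--Schmidt/Azumaya argument on finite-length pieces), plausibly organized through the perfect/subperfect-ring theory referenced in the keywords. The second point is the exclusion just mentioned: if $A$ is Noetherian with $\Ass A=\{\fp\}$, $\fp=\Nil A\ne 0$, and $D:=A/\fp$ a non-field principal ideal domain, choose $k$ least with $\fp^k=0$. An embedded prime would violate $\Ass A=\{\fp\}$, so the top power $\fp^{k-1}$ is torsion-free over $D$, hence free of positive rank; and since each $\fp^i/\fp^{i+1}$ is a $D$-module quotient of $(\fp/\fp^2)\otimes_D(\fp^{i-1}/\fp^i)$, torsionness of $\fp/\fp^2$ over $D$ would propagate and make $\fp^{k-1}$ torsion, a contradiction, so $\rank_D(\fp/\fp^2)\ge 1$. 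Taking a lift $\tilde\pi\in A$ of a prime element $\pi$ of $D$, the ideals $I_n:=\tilde\pi^nA+\fp$ form a strictly descending chain with $I_n\otimes_A D\cong D\oplus\big((\fp/\fp^2)\otimes_D D/\pi^n\big)$, of $D$-length unbounded in $n$; hence the $I_n$ are pairwise non-isomorphic, contradicting isoartinianness, so this case does not occur.
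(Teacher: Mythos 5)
Your overall architecture is sound and in places genuinely different from the paper's: the paper decomposes $R$ into indecomposable factors by an induction on $|\Min R|$ (Lemma \ref{2.6f}) and identifies indecomposability with $|\Min R|=1$ via the coprimality of distinct minimal primes (Lemmas \ref{2.7g}(ii) and \ref{2.8h}), whereas you pass to $R/\Nil R$, invoke \cite[Corollary 4.8]{FN3}, and lift idempotents modulo the nil ideal $\Nil R$ --- a clean alternative. Your exclusion of the case ``$A$ Noetherian, $\Ass A=\Min A=\{\fp\}$, $\fp\neq 0$'' is also a correct but different argument: the paper shows each maximal ideal is generated by a regular element (Lemma \ref{2.4d}), deduces $\fp=\fm\fp$ (Lemma \ref{2.10j}), and kills $\fp$ locally by Nakayama; your chain $I_n=\tilde\pi^nA+\fp$ with the growing torsion of $I_n\otimes_A D$ reaches the same contradiction and is a legitimate substitute.

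The genuine gap is the nilpotency of $\Nil R$ (equivalently, of the maximal ideal of a zero-dimensional isoartinian local ring), which you need to place the zero-dimensional non-Noetherian factors into type (ii). You prove only the case where the layers $\fm^n/\fm^{n+1}$ eventually have dimension $1$ over $A/\fm$, and for the general case you say the argument ``should come from'' a Krull--Schmidt analysis ``plausibly organized through'' perfect-ring theory --- that is an acknowledgement that the step is missing, not a proof. The difficulty is real: isoartinianness controls descending chains only up to isomorphism, and the quotients $\fm^N/\fm^{N+t}$ need not assemble into a descending chain of pairwise non-isomorphic ideals of $A$ in any obvious way, so your sketch does not close. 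The paper does not prove this from scratch either; it quotes it as Lemma \ref{2.7g}(i), citing \cite[Proposition 2.3]{Da}. Either import that result explicitly or supply a complete argument; as written, type (ii) is not established. (A second, much smaller, omission: you assert $\Min A\subseteq\Ass A$ without justification; this is the nontrivial Lemma \ref{2.2b} of the paper and should be cited rather than taken for granted.)
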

In the proof of this theorem, a key ingredient is to demonstrate that for an isoartinian ring $R$, the set of associated primes of $R$
is finite, and every minimal prime ideal of $R$ is also an associated prime of $R$. This implies, in particular, that the Krull dimension
of every isoartinian ring is at most one.

While there are many examples of rings falling under types (i) and (iii) in the above theorem, the other four types of isoartinian rings
deserve more attention. We provide some examples of these types. One of these examples demonstrates that isoartinian rings may not be
isonoetherian. Furthermore, we provide some examples to show that the condition of being ``isoartinian" cannot be relaxed for types (ii),
(v) and (vi).

\section{Main results}

In the proof of our main result, we will utilize the fact that the Krull dimension of every isoartinian ring is at most one, as stated in Proposition \ref{2.3c}. To demonstrate this, we will rely on the following two lemmas.

Recall that the set of associated primes of $R$ is defined as $$\Ass R=\{\fp\in \Spec R \mid \fp=(0:_Rx) \
\text{for\ some}\ x\in R\}.$$ We use $Z(R)$ to represent the set of zero-divisors of $R$.
 An ideal $I$
of a ring $R$ is said to be {\it regular} if it contains a regular element of $R$. A ring $R$ is called {\it Marot} if every regular ideal
of $R$ can be generated by regular elements. It is well-known that if $Z(R)$ is a finite union of prime ideals, then $R$ is a Marot ring.

\begin{lemma}\label{2.1a} Let $R$ be an isoartinian ring. Then $\Ass R$ is nonempty and finite. In particular, $R$ is a Marot ring.
\end{lemma}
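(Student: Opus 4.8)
The plan is to show $\Ass R \ne \emptyset$ first and then finiteness, after which the Marot conclusion follows from the quoted fact that a ring whose zero-divisors form a finite union of primes is Marot. Nonemptiness is the easy part: either $R$ is a domain, in which case $(0)$ is prime and equals $(0 :_R x)$ for any nonzero $x$, so $(0) \in \Ass R$; or $R$ is not a domain, so there exist nonzero $a, b$ with $ab = 0$, whence $(0 :_R a)$ is a nonzero proper ideal, and by a standard argument a maximal element of $\{(0 :_R x) : 0 \ne x \in R\}$ (which exists by Zorn once we know such annihilators form a suitable family — and here I would instead use the isoartinian hypothesis to get a chain stabilizing up to isomorphism, or note $\Min(R)$ primes sit inside $\Ass$ via localization) is prime. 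In fact the cleanest route for nonemptiness that also fits the paper's toolkit is: pick any minimal prime $\fp$ of $R$; I will argue below (as a by-product of the finiteness argument) that $\fp \in \Ass R$.

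For finiteness I would argue by contradiction: suppose $\fp_1, \fp_2, \fp_3, \dots$ are pairwise distinct associated primes, say $\fp_i = (0 :_R x_i)$. The strategy is to manufacture from these a strictly descending chain of ideals $I_1 \supsetneq I_2 \supsetneq \cdots$ that are pairwise \emph{non-isomorphic} as $R$-modules, contradicting isoartinian. The natural candidates are the cyclic submodules $R x_i \cong R/\fp_i$, or finite sums/intersections thereof. The key algebraic input is that $R/\fp_i \not\cong R/\fp_j$ as $R$-modules when $\fp_i \ne \fp_j$, since the annihilator of the module $R/\fp$ recovers $\fp$; so a descending chain whose successive quotients or whose terms encode different $\fp_i$'s cannot terminate up to isomorphism. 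Concretely I would consider $J_n = \sum_{i \ge n} R x_i$ or, to keep things inside a Noetherian-free setting, the ideals obtained by a prime-avoidance bookkeeping: one shows that passing from $\{\fp_i\}_{i \ge n}$ to $\{\fp_i\}_{i \ge n+1}$ changes the isomorphism type of the associated ideal because it changes the set of primes appearing as annihilators of cyclic subquotients. A slicker alternative is to localize: if $\Ass R$ were infinite it would have infinitely many elements, but $\Ass R \subseteq Z(R) \cup \{(0)\}$, and each associated prime is the annihilator of an element, so distinct associated primes give cyclic modules $R/\fp_i$ that are pairwise non-isomorphic; assembling $M = \bigoplus_{i=1}^\infty R/\fp_i$ as a subquotient-witness inside $R$ and extracting a non-stabilizing descending chain of ideals is the crux.

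The main obstacle is precisely this last manufacturing step: isoartinian only forbids descending chains of \emph{ideals} (not of arbitrary submodules of arbitrary modules) from being strictly decreasing up to isomorphism, so I must realize the infinitely many non-isomorphic objects as honest ideals of $R$, not just as abstract cyclic modules. To do this I expect to need: (a) that the $x_i$ can be chosen so that the ideals $R x_i$, or better the partial sums $R x_1 + \cdots + R x_n$ complemented appropriately, have controlled annihilators; and (b) an isomorphism-invariant — most naturally the finite set $\{\fp \in \Spec R : \fp = (0 :_R y) \text{ for some } y \text{ in the ideal}\}$, i.e.\ the associated primes of the ideal viewed as a module — which strictly shrinks along a suitable descending chain. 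Since associated primes are a module isomorphism invariant, a descending chain of ideals with strictly decreasing associated-prime sets cannot terminate up to isomorphism, giving the contradiction. Once $\Ass R$ is finite and nonempty, $Z(R) = \bigcup_{\fp \in \Ass R} \fp$ is a finite union of primes, so $R$ is Marot by the quoted fact, and (choosing $\fp_1 \in \Ass R$) the nonemptiness assertion is subsumed.
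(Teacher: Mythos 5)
There is a genuine gap in the finiteness argument, and you acknowledge it yourself: you never actually produce the strictly descending chain of pairwise non-isomorphic ideals. The candidate $J_n=\sum_{i\ge n}Rx_i$ need not be strictly descending, and, more importantly, the invariant you propose (the set of primes arising as annihilators of elements of the ideal) need not strictly shrink along it: the fact that $\fp_{n-1}\notin\{\fp_i\}_{i\ge n}$ does not prevent some other element $y\in J_n$ from satisfying $(0:_Ry)=\fp_{n-1}$. Your steps (a) and (b) are exactly the missing mathematical content, not routine bookkeeping. The paper sidesteps the whole difficulty by quoting \cite[Lemma 4.10(1)]{FN3}: an isoartinian ring has ACC and DCC on annihilator ideals. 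It then works with the primes themselves (which are annihilator ideals), chosen so that $\fp_1$ is maximal in $\Ass R$ and $\fp_{n+1}$ is maximal in $\Ass R\setminus\{\fp_1,\dots,\fp_n\}$, and applies DCC to the chain of intersections $\fp_1\supseteq\fp_1\cap\fp_2\supseteq\cdots$; stabilization forces $\fp_i\subseteq\fp_{\ell+1}$ for some $i\le\ell$ by primality, and the maximality in the choice of $\fp_i$ upgrades this to $\fp_i=\fp_{\ell+1}$, a contradiction. No ideal has to be manufactured from the elements $x_i$ at all.

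Your nonemptiness discussion is also shaky. The ``cleanest route'' you propose --- that every minimal prime is associated --- is not a by-product of your finiteness argument; it is the content of the paper's Lemma \ref{2.2b}, whose proof in turn uses the present lemma, so deriving nonemptiness that way would be circular in the paper's logical order. The correct easy route is the one you mention only in passing: a maximal element of $\{(0:_Rx)\mid 0\neq x\in R\}$ is prime. But the existence of such a maximal element comes from ACC on annihilator ideals (again \cite[Lemma 4.10(1)]{FN3}), not from Zorn's lemma (the family is not obviously closed under unions of chains) nor from an unspecified ``stabilizing up to isomorphism'' argument. Finally, the identity $Z(R)=\bigcup_{\fp\in\Ass R}\fp$ needed for the Marot conclusion itself relies on this ACC: every $(0:_Rx)$ with $x\neq 0$ is contained in a maximal, hence prime, annihilator.
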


\begin{proof} Since $R$ is isoartinian, it satisfies both ACC and DCC on annihilator ideals, as shown in \cite[Lemma 4.10(1)]{FN3}. 
Let $\Sigma:=\{(0:_Rx) \mid 0\neq x\in R\}$, which is nonempty since $R$ is nonzero. As $R$ has ACC on annihilator ideals, $\Sigma$ 
has a maximal element $(0:_Rz)$ with respect to inclusion. One can easily observe that the ideal $(0:_Rz)$ is prime, and hence 
$(0:_Rz)$ belongs to $\Ass R$.

Now, we show that $\Ass R$ is finite. Suppose to the contrary that $\Ass R$ is infinite. Then there are prime ideals $\fp_1, \fp_2,
\ldots, \fp_n, \ldots \in \Ass R$ such that $\fp_1$ is a maximal element of $\Ass R$ and $\fp_{n+1}$ is a maximal element of $\Ass R
\setminus \{\fp_1, \fp_2,\ldots, \fp_n\}$ for all $n\geq 1$. Consider the descending chain of ideals $$\fp_1\supseteq \fp_1\cap \fp_2
\supseteq \fp_1\cap \fp_2\cap \ldots \cap \fp_n\supseteq \cdots .$$ Since $R$ has DCC on annihilator ideals, this chain stabilizes.
Thus, there is a natural number $\ell$ such that $\fp_1\cap \fp_2 \cap \ldots \cap \fp_{\ell}=\fp_1\cap \fp_2 \cap \ldots\cap
\fp_{\ell}\cap \fp_{\ell+1}$. Thus, $\fp_i\subseteq \fp_{\ell+1}$ for some $1\leq i \leq \ell$.  However, $\fp_i$ is a maximal element
of $\Ass R\setminus \{\fp_1, \fp_2,\ldots, \fp_{i-1}\}$, which implies that $\fp_i=\fp_{\ell+1}$. This is a contradiction, since
$\fp_{\ell+1}\in  \Ass R\setminus \{\fp_1, \fp_2,\ldots, \fp_{i}, \ldots, \fp_{\ell}\}$.

Finally, note that $Z(R)$ is the union of members of $\Ass R$, and that if $Z(R)$ is a finite union of prime ideals, then $R$ is a
Marot ring. Therefore, $R$ is a Marot ring.
\end{proof}

We use $\Spec R$ to denote the set of all prime ideals of $R$. Also, let $\Max R$ (respectively  $\Min R$) denote the set of all maximal
(respectively minimal) prime ideals of $R$.

\begin{lemma}\label{2.2b} Let $R$ be an isoartinian ring. Then $\Min R$ is a nonempty subset of $\Ass R$. As a consequence, $\Min R$ is
finite.
\end{lemma}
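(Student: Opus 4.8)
The plan is to prove the two assertions separately. For the nonemptiness of $\Min R$, I would simply invoke the standard fact that every nonzero ring has a minimal prime: $\Spec R\neq\emptyset$, and Zorn's lemma applied to the primes contained in a fixed prime, ordered by reverse inclusion (the intersection of a descending chain of primes being prime), yields a minimal one. The real content is the inclusion $\Min R\subseteq\Ass R$. Here I would use Lemma \ref{2.1a}: write $\Ass R=\{\fp_1,\ldots,\fp_n\}$ (finite), and recall from the proof of that lemma that $Z(R)=\bigcup_{i=1}^n\fp_i$. Then for $\fp\in\Min R$ it is enough to show $\fp\subseteq Z(R)$; prime avoidance will give $\fp\subseteq\fp_i$ for some $i$, and minimality of $\fp$ will force $\fp=\fp_i\in\Ass R$.

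So the key step is the (classical) observation that every minimal prime $\fp$ of any ring consists of zero-divisors. I would argue as follows: let $x\in\fp$, and assume $x\neq 0$ (if $x=0$ then $x\in Z(R)$ since $R\neq 0$). In $R_\fp$ the only prime ideal is $\fp R_\fp$, which is thus the nilradical, so the image of $x$ is nilpotent; hence $sx^m=0$ for some $m\geq 1$ and some $s\in R\setminus\fp$, and I pick such a pair with $m$ minimal. If $m=1$, then $sx=0$ with $s\neq 0$ (as $0\in\fp$), so $x\in Z(R)$. If $m\geq 2$, then $sx^{m-1}\neq 0$ by minimality, while $(sx^{m-1})x=0$, so again $x\in Z(R)$. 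This yields $\fp\subseteq Z(R)$, completing the inclusion $\Min R\subseteq\Ass R$. The finiteness of $\Min R$ is then immediate from the finiteness of $\Ass R$ in Lemma \ref{2.1a}.

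I do not expect a genuine obstacle here: the proof is an assembly of standard facts. The only points needing care are choosing the exponent $m$ minimally (so that $sx^{m-1}\neq 0$) in the zero-divisor argument, and checking that prime avoidance applies — which it does, since all the $\fp_i$ are prime. The substantive work, namely the finiteness of $\Ass R$ and the equality $Z(R)=\bigcup_{\fp\in\Ass R}\fp$, has already been carried out in Lemma \ref{2.1a}.
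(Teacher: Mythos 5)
There is a genuine gap, and it sits exactly at the point where the real work of this lemma lies. Your chain of reductions is: $\fp\in\Min R$ implies $\fp\subseteq Z(R)=\bigcup_{i=1}^n\fp_i$ with $\fp_i\in\Ass R$, so by prime avoidance $\fp\subseteq\fp_i$ for some $i$, ``and minimality of $\fp$ will force $\fp=\fp_i$.'' That last step is backwards: minimality of $\fp$ excludes primes strictly \emph{below} $\fp$, whereas here $\fp_i$ sits \emph{above} $\fp$. Nothing prevents $\fp\subsetneq\fp_i$; indeed in Example \ref{3.3c} of this very paper the minimal prime $\fp$ is contained in the associated prime $\fm\supsetneq\fp$, so prime avoidance could perfectly well hand you the wrong $\fp_i$. (In that example $\fp$ also happens to be associated, but your argument gives no way to see this.) The classical fact that a minimal prime consists of zero-divisors, which you prove correctly, only ever yields $\fp\subseteq\fp_i$; for a general non-Noetherian ring a minimal prime need not be associated at all, so some use of the isoartinian hypothesis beyond Lemma \ref{2.1a} is unavoidable here.

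The paper closes this gap by localizing: $R_{\fp}$ is again isoartinian, its unique prime $\fp R_{\fp}$ is associated by Lemma \ref{2.1a}, say $\fp R_{\fp}=(0:_{R_{\fp}}x/1)$, and then DCC on annihilators in $R$ is used to produce a finitely generated ideal $J\subseteq\fp$ with $(0:_RJ)=(0:_R\fp)$; finite generation of $J$ lets one clear denominators to find $s\notin\fp$ with $\fp=(0:_Rsx)$. If you want to salvage your outline, you must replace the prime-avoidance step with an argument of this kind that exhibits $\fp$ itself as an annihilator, not merely as a subset of one. The nonemptiness and finiteness parts of your proposal are fine.
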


\begin{proof} First, note that since $R$ is nonzero, $\Min R$ is nonempty. It follows from the definition that the localization of an
isoartinian ring at any multiplicative set is also isoartinian, as shown in \cite[Lemma 3.2]{Da}. Let $\fp\in \Min R$. Then $\fp R_{\fp}$
is the unique prime ideal of the isoartinian ring $R_{\fp}$. According to Lemma \ref{2.1a}, we have $\Ass R_{\fp}=\{\fp R_{\fp}\}$. Hence,
there exists $x/t\in R_{\fp}$ such that $\fp R_{\fp}=(0:_{R_{\fp}}x/t)$. Clearly, $(0:_{R_{\fp}}x/t)=(0:_{R_{\fp}}x/1)$.

Let $\Sigma$ denote the nonempty set of all annihilator ideals of the form $(0:_RI)$, where $I$ is a finitely generated ideal of $R$ and $I\subseteq \fp$. Since $R$ has DCC on annihilator ideals, $\Sigma$ admits a minimal element $(0:_RJ)$. We claim that $(0:_RJ)=(0:_R\fp)$.
Suppose the contrary holds. Then as $(0:_R\fp)\subseteq (0:_RJ)$, there exists an element $y\in (0:_RJ)\setminus (0:_R\fp)$. Since $y\notin (0:_R\fp)$, there exists $a\in \fp$ such that $ay\neq 0$. As $(0:_RJ+Ra)\subseteq (0:_RJ)$, by the minimality of $(0:_RJ)$ in $\Sigma$, we
deduce that $(0:_RJ+Ra)=(0:_RJ)$. Thus, $y\in (0:_RJ+Ra)$, which implies $ay=0$, a contradiction.

As $x/1\in (0:_{R_{\fp}}\fp R_{\fp})\subseteq (0:_{R_{\fp}}JR_{\fp})$ and $J$ is finitely generated, there exists $s\in R\setminus \fp$
such that $J(sx)=0$. Consequently, $sx\in (0:_RJ)=(0:_R\fp)$, and hence $\fp\subseteq (0:_Rsx)$. On the other hand, we have
$$(0:_Rx)\subseteq (0:_{R_{\fp}}x/1)\cap R=\fp R_{\fp}\cap R=\fp,$$ so $(0:_Rsx)\subseteq \fp$. Therefore, $\fp=(0:_Rsx)$, and so
$\fp\in \Ass R$.
\end{proof}

We can now prove the claim we made at the beginning of this section:

\begin{proposition}\label{2.3c} Let $R$ be an isoartinian ring. Then $R/\fp$ is a principal ideal domain for every $\fp\in \Ass R$.
Consequently $\dim R\leq 1$, and so $\Spec R=\Min R\cup \Max R$.
\end{proposition}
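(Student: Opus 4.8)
The plan is to reduce everything to the structure theorem for \emph{reduced} isoartinian rings, after first checking that each $R/\fp$ is itself an isoartinian ring.

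\textbf{First step: $R/\fp$ is an isoartinian ring.} Since $\fp\in\Ass R$, we have $\fp=(0:_Rx)$ for some $x\in R$, and multiplication by $x$ gives an $R$-module isomorphism $R/\fp\cong Rx$, where $Rx$ is an ideal of $R$, i.e. a submodule of $R$ regarded as a module over itself. As $R$ is isoartinian, $R$ is an isoartinian $R$-module, and any submodule of an isoartinian module is isoartinian (a descending chain in the submodule is a descending chain in the ambient module, and the isomorphisms witnessing termination are $R$-linear). Hence $R/\fp$ is an isoartinian $R$-module. Because $R\to R/\fp$ is surjective, the $R$-submodules of $R/\fp$ are exactly its ideals, and any $R$-linear map between two such ideals is automatically $(R/\fp)$-linear; so the isoartinian condition for the $R$-module $R/\fp$ is precisely the isoartinian condition for the ring $R/\fp$. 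Thus $R/\fp$ is an isoartinian ring.

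\textbf{Second step: $R/\fp$ is a principal ideal domain.} Since $\fp$ is prime, $R/\fp$ is a domain, in particular reduced. By \cite[Corollary 4.8]{FN3}, a reduced isoartinian ring is isomorphic to a finite direct product of principal ideal domains; a nontrivial direct product always has nonzero zero-divisors, so a domain that is such a product must consist of a single factor. Therefore $R/\fp$ is a principal ideal domain.

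\textbf{Consequences.} Given a chain $\fq_0\subsetneq\fq_1\subsetneq\cdots\subsetneq\fq_n$ of prime ideals of $R$, I may shrink $\fq_0$ to a minimal prime, which lies in $\Ass R$ by Lemma \ref{2.2b}; passing to $R/\fq_0$, which is a principal ideal domain and hence of Krull dimension at most $1$, forces $n\le 1$. So $\dim R\le 1$. Finally, any $\fq\in\Spec R$ contains some minimal prime $\fp$: if $\fp=\fq$ then $\fq\in\Min R$, and if $\fp\subsetneq\fq$ then no prime of $R$ can strictly contain $\fq$ (that would yield a chain of length $2$), so $\fq\in\Max R$. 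Hence $\Spec R=\Min R\cup\Max R$.

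I expect the first step to carry the real content: quotients of isoartinian rings need not be isoartinian in general, so one cannot simply ``pass to $R/\fp$'' — the hypothesis $\fp\in\Ass R$ is used exactly to embed $R/\fp$ as a submodule of $R$, which does inherit the isoartinian property. Once that is in place, the rest is formal, resting on the Facchini--Nazemian classification of reduced isoartinian rings together with elementary dimension theory of PIDs.
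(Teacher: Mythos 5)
Your proposal is correct and follows essentially the same route as the paper: realize $R/\fp\cong Rx$ as a submodule of the isoartinian module $R$, invoke \cite[Corollary 4.8]{FN3} for the reduced (here, domain) case, and then use Lemma \ref{2.2b} to place a minimal prime in $\Ass R$ and bound the dimension. The extra details you supply (transfer of the isoartinian condition from the $R$-module $R/\fp$ to the ring $R/\fp$, and the observation that a domain cannot be a nontrivial product of PIDs) are correct elaborations of steps the paper leaves implicit.
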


\begin{proof}  Let $\fp \in \Ass R$. Then, $\fp=(0:_Rx)$ for some nonzero element $x$ of $R$. As $Rx$ is a submodule of $R$ and
$R/\fp\cong Rx$, the ring $R/\fp$ is isoartinian. By \cite[Corollary 4.8]{FN3}, we deduce that $R/\fp$ is a principal ideal domain.

Clearly, $\dim R=\dim R/\fq$ for some $\fq\in \Min R$. By Lemma \ref{2.2b}, we have $\fq\in \Ass R$. As $R/\fq$ is a principal
ideal domain, it follows that $\dim R=\dim R/\fq \leq 1$, as desired.
\end{proof}

To prove the main result, we need the following six additional lemmas. We begin by showing that isoartinian rings are a subclass of a
well-known class of rings called Pr\"ufer rings.

Recall that the localization of a ring $R$ at the set of all regular elements is called the {\it total ring of fractions} of $R$ and
is denoted by $T(R)$. An ideal $I$ of a ring $R$ is said to be {\it invertible} if $I^{-1}I=R$, where $$I^{-1}=\{r\in T(R) \mid rI
\subseteq R\}.$$
A ring $R$ for which every finitely generated regular ideal is invertible is called a {\it Pr\"ufer ring}. Following \cite{Gri},
if every regular ideal of a ring $R$ is invertible, then we call it a {\it Dedekind ring}. So, Dedekind rings are Pr\"ufer.
 

\begin{lemma}\label{2.4d} Let $R$ be an isoartinian ring. Every regular ideal of $R$ is principal and generated by a regular element. Consequently, $R$ is a Dedekind ring.
\end{lemma}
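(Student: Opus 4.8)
The plan is to establish the stronger module-theoretic fact that every regular ideal $I$ of $R$ satisfies $I\cong R$ as an $R$-module; both conclusions of the lemma then drop out formally. I would fix a regular element $a\in I$, which exists by the very definition of a regular ideal (so the Marot property is not needed here). Since $a$ is a non-zero-divisor, multiplication by $a^{j}$ is an injective endomorphism of $R$, hence restricts to $R$-module isomorphisms $R\cong a^{j}R$ and $I\cong a^{j}I$ for every $j\ge 0$. From $a\in I$ we get $aR\subseteq I$; multiplying the two inclusions $I\subseteq R$ and $aR\subseteq I$ by powers of $a$ and interleaving yields the descending chain of ideals
\[
I\ \supseteq\ aR\ \supseteq\ aI\ \supseteq\ a^{2}R\ \supseteq\ a^{2}I\ \supseteq\ a^{3}R\ \supseteq\ a^{3}I\ \supseteq\ \cdots .
\]

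Next I would read off isomorphism types. After the first term the chain consists exactly of the ideals $a^{j}R$ and $a^{j}I$ for $j\ge 1$, so its members are isomorphic, alternately, to $R$ and to $I$, and both isomorphism classes occur infinitely often along the chain. Since $R$ is isoartinian, this chain is stationary up to isomorphism, i.e. there is an index beyond which all its members are mutually isomorphic. Comparing with the previous sentence forces $I\cong R$ as $R$-modules. Fixing an isomorphism $\psi\colon R\to I$ and putting $g:=\psi(1)$, surjectivity of $\psi$ gives $I=Rg$ while injectivity gives $\Ann_{R}(g)=\Ann_{R}(1)=0$; hence $g$ is a regular element, and $I$ is principal and generated by a regular element.

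For the final assertion I would argue directly: if $I=gR$ with $g$ regular, then $g$ becomes a unit in $T(R)$, so $I^{-1}=\{r\in T(R)\mid rg\in R\}=g^{-1}R$, and therefore $I^{-1}I=g^{-1}R\cdot gR=R$, i.e. $I$ is invertible. As this applies to every regular ideal, $R$ is a Dedekind ring, and in particular a Pr\"ufer ring.

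I do not expect a genuine obstacle here; essentially all the content is in choosing the right chain. The only points needing care are (i) confirming that $aR,aI,a^{2}R,\dots$ really are ideals and that each displayed inclusion holds, and (ii) invoking the precise meaning of ``terminates up to isomorphism'' — a descending chain $M_{1}\supseteq M_{2}\supseteq\cdots$ is stationary up to isomorphism when $M_{n}\cong M_{m}$ for all sufficiently large $n$ and $m$ — so as to reach $I\cong R$ rather than a weaker conclusion. It is worth noting that this argument uses only that $R$ is isoartinian and that $I$ is regular, so it is logically independent of Lemmas \ref{2.1a} and \ref{2.2b} and of Proposition \ref{2.3c}.
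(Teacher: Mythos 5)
Your argument is correct and is essentially the paper's own proof: the paper uses the same interleaved chain $R\supseteq I\supseteq Rz\supseteq Iz\supseteq Rz^{2}\supseteq Iz^{2}\supseteq\cdots$ for a regular element $z\in I$, concludes $I\cong Rz^{k+1}\cong R$ from the isoartinian condition, extracts a regular generator as the image of $z^{k+1}$ under the isomorphism, and verifies invertibility of $I=Rz_0$ exactly as you do. The only cosmetic difference is that you phrase the conclusion as $I\cong R$ directly rather than $I\cong Rz^{k+1}$, which changes nothing.
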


\begin{proof} Let $I$ be a regular ideal of $R$. Then, $I$ contains an element $z$ such that $(0:_Rz)=0$. Thus, $I\cong  Iz^k$ for
every natural number $k$. Consider the following descending chain of ideals of $R$ $$R\supseteq I\supseteq Rz\supseteq Iz\supseteq
Rz^2\supseteq Iz^2 \supseteq Rz^3\supseteq \cdots.$$ As $R$ is isoartinian, there is a natural number $k$ such that $I\cong Iz^k
\cong Rz^{k+1}$. It follows that $I$ is principal. Clearly, $z^{k+1}$ is regular. Let $\varphi: Rz^{k+1}\lo I$ be the mentioned
isomorphism. Then, $I=R\varphi (z^{k+1})$ and $\varphi (z^{k+1})$ is regular.
	
For the last statement, let $J$ be a finitely generated regular ideal. By the first statement, $J=Rz_0$ for some regular
element $z_0$. As $1=(\frac{1}{z_0})z_0$, it follows that $R\subseteq J^{-1}J$. Therefore, $J^{-1}J=R$, and so $R$ is a Dedekind
ring. Note that $J^{-1}J\subseteq R$ is evident.
\end{proof}	

A {\it discrete valuation ring} (DVR) is a principal ideal domain with a unique nonzero maximal ideal. An Artinian local principal
ideal ring is called {\it special}, and it has only finitely many ideals, each of which is a power of the maximal ideal.

\begin{lemma}\label{2.5e} Assume that $R$ is a local ring with a principal maximal ideal ${\fm}$. Then the following are equivalent:
\begin{enumerate}
\item[(i)] $R$ is Noetherian.
\item[(ii)]  $\bigcap_{i=1}^\infty{\fm}^i=0$.
\item[(iii)] $\{0\}\cup \{\fm^i \mid i\in \mathbb{N}_0\}$ is the set of all ideals of $R$, and so $R$ is either a DVR or
a special ring.
\end{enumerate}
\end{lemma}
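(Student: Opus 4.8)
The strategy is to run the cycle (i) $\Rightarrow$ (ii) $\Rightarrow$ (iii) $\Rightarrow$ (i), since the implication (iii) $\Rightarrow$ (i) is immediate (a ring with only finitely many ideals, or with ideals well-ordered by reverse inclusion, clearly satisfies ACC).

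First I would prove (i) $\Rightarrow$ (ii). Here I want to invoke Krull's intersection theorem for the Noetherian local ring $(R,\fm)$: $\bigcap_{i=1}^\infty \fm^i = 0$. This is the standard statement, so I can cite it rather than reprove it. (Alternatively, one can give the short direct argument: write $x \in \bigcap \fm^i$, so $\fm x = x$ by Artin--Rees or by a determinant trick, hence $x = 0$ by Nakayama — but citing Krull intersection is cleanest.)

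The main work is in (ii) $\Rightarrow$ (iii). Write $\fm = (t)$. The idea is to show every nonzero ideal $I$ of $R$ equals $\fm^n$ for some $n \in \mathbb{N}_0$. Given $0 \neq I$, set $n := \sup\{ i \mid I \subseteq \fm^i \}$; this supremum is finite because $\bigcap_i \fm^i = 0$ forces $I \not\subseteq \fm^{n+1}$ for $n$ large. So $I \subseteq \fm^n$ but $I \not\subseteq \fm^{n+1}$, and I pick $a \in I$ with $a \in \fm^n \setminus \fm^{n+1}$, say $a = u t^n$. I claim $u$ is a unit: if not, $u \in \fm = (t)$, giving $a \in \fm^{n+1}$, a contradiction. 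Hence $t^n = u^{-1} a \in I$, so $\fm^n = (t^n) \subseteq I$, and combined with $I \subseteq \fm^n$ we get $I = \fm^n$. Thus the ideals of $R$ are exactly $0$ and the $\fm^i$. For the dichotomy: if $\fm^i = \fm^{i+1}$ for some $i$, then Nakayama gives $\fm^i = 0$, so $R$ is an Artinian local principal ideal ring, i.e.\ a special ring; otherwise all the powers $\fm^i$ are distinct and nonzero, $R$ is a domain (if $xy = 0$ with $x = ut^i$, $y = vt^j$ both nonzero, $u,v$ units, then $t^{i+j} = 0$, contradiction), and a Noetherian local domain of dimension one with principal maximal ideal is a DVR.

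The one subtlety to watch is the step $I \not\subseteq \fm^{n+1}$ for large $n$, which is exactly where hypothesis (ii) enters and prevents the "supremum'' from being $\infty$; without it the argument collapses, as one expects since (ii) is equivalent to the others. I expect this to be the only delicate point — everything else is Nakayama's lemma and elementary manipulation with the single generator $t$. Finally, (iii) $\Rightarrow$ (i) needs no comment beyond noting that in case (iii) any ascending chain of ideals, being a chain among $\{0\} \cup \{\fm^i\}$, is a chain of the well-ordered set $\mathbb{N}_0 \cup \{\infty\}$ under reverse inclusion and hence stabilizes.
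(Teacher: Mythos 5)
Your proposal is correct and follows essentially the same route as the paper: Krull intersection for (i)$\Rightarrow$(ii), then for (ii)$\Rightarrow$(iii) the identical argument of locating the largest $n$ with $I\subseteq\fm^n$, extracting a unit times $t^n$ from $I$ to conclude $I=\fm^n$, and splitting the DVR/special dichotomy according to whether $\fm$ is nilpotent. The only cosmetic difference is that you justify the DVR case via ``Noetherian local domain of dimension one,'' whereas the paper's definition of DVR (local principal ideal domain with nonzero maximal ideal) makes that conclusion immediate once $R$ is shown to be a domain.
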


\begin{proof} (i)$\Rightarrow$(ii) is immediate by the Krull intersection theorem.
	
(ii)$\Rightarrow$ (iii) Let ${\fm}=Rz$ and $I$ be a nonzero ideal of $R$. Then there exists a natural number $\ell$ such that $I\subseteq \fm^{\ell}$ and $I\nsubseteq \fm^{\ell+1}$. Choose $x\in I\setminus \fm^{\ell+1}$. Since $x\in \fm^{\ell}\setminus \fm^{\ell+1}$, we have $x=rz^{\ell}$ for some $r\in R\setminus \fm$. Since $r$ is a unit, we have $z^{\ell}\in I$, and so $\fm^{\ell}=Rz^{\ell}=I$. Thus,
$\{0\}\cup \{\fm^i \mid i\in \mathbb{N}_0\}$ is the set of all ideals of $R$. In particular, $R$ is a principal ideal ring. If $\fm$ is
nilpotent, then $R$ is Artinian, and so $R$ is a special ring.

Now, assume that $\fm$ is not nilpotent, so $z^k\neq 0$ for all $k\in \Bbb N$. We will show that $R$ is a DVR. Since $R$ is a local principal ideal ring, and $\fm\neq 0$, it suffices to show that $R$ is a domain. Let $x$ be a nonzero element of $R$. Since $\bigcap_{i=1}^\infty{\fm}^i
=0$, there exists a natural number $t$ such that $x\in {\fm}^t\setminus {\fm}^{t+1}$. Hence, $x=rz^t$ for some unit $r$. Thus, every nonzero element of $R$ can be written as $uz^n$, where $u$ is a unit of $R$ and $n$ is a natural number. Now, a direct examination shows that $R$ is
a domain. Therefore, $R$ is a DVR.
	
(iii)$\Rightarrow$(i) is evident.
\end{proof}

\begin{lemma}\label{2.6f} Let $R$ be an isoartinian ring. Then $R$ is isomorphic to the direct product of finitely many indecomposable
isoartinian rings. Moreover, if $$R\cong R_1\times \cdots \times R_l\cong S_1\times \cdots \times S_m$$ where $R_i$'s and $S_j$'s are indecomposable isoartinian rings, then $l=m$ and $R_i\cong S_{\sigma (i)}$ for some permutation $\sigma$ on the set $\{1,\dots, l\}$.
\end{lemma}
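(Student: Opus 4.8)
The plan is to treat existence and uniqueness separately; the isoartinian hypothesis is needed only for existence (and only through Lemma~\ref{2.2b}), whereas uniqueness is a purely formal statement about idempotents that holds for every commutative ring.

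For existence I would work on $\Spec R$. Every prime of $R$ contains a minimal prime, so $\Spec R=\bigcup_{\fp\in\Min R}\V(\fp)$, and by Lemma~\ref{2.2b} this is a \emph{finite} union. Each $\V(\fp)$ is homeomorphic to $\Spec(R/\fp)$, which is irreducible since $R/\fp$ is a domain (a principal ideal domain, by Proposition~\ref{2.3c}), hence connected. A space that is a finite union of connected subsets has only finitely many connected components — each of the given subsets lies in a single component, and they cover the space — and when there are only finitely many components, each is clopen. Writing $\Spec R=C_1\sqcup\cdots\sqcup C_k$ for the connected components, the usual dictionary between clopen subsets of $\Spec R$ and idempotents of $R$ yields orthogonal idempotents $e_1,\dots,e_k$ with $e_1+\cdots+e_k=1$, $R\cong e_1R\times\cdots\times e_kR$, and $\Spec(e_iR)\cong C_i$. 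Since $C_i$ is connected, $e_iR$ has no idempotents other than $0$ and $e_i$, i.e.\ it is indecomposable; and $e_iR$ is isoartinian because its ideals are exactly the $R$-submodules of $e_iR$ and any $R$-isomorphism between two such submodules is automatically $e_iR$-linear (both are annihilated by $1-e_i$), so descending chains of ideals of $e_iR$ stabilize up to isomorphism just as in $R$.

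For uniqueness, suppose $R\cong R_1\times\cdots\times R_l\cong S_1\times\cdots\times S_m$ with all factors indecomposable isoartinian rings. Transporting the two product decompositions back to $R$ gives complete orthogonal sets of idempotents $\{\epsilon_1,\dots,\epsilon_l\}$ and $\{\delta_1,\dots,\delta_m\}$ with $\epsilon_iR\cong R_i$ and $\delta_jR\cong S_j$. Since each $R_i$ has no nontrivial idempotents, a short check shows each $\epsilon_i$ is \emph{primitive}: it is nonzero and cannot be written as a sum of two nonzero orthogonal idempotents; similarly for the $\delta_j$. From $\epsilon_i=\epsilon_i\cdot 1=\sum_j\epsilon_i\delta_j$, a sum of pairwise orthogonal idempotents, primitivity of $\epsilon_i$ forces exactly one summand to be nonzero, say $\epsilon_i\delta_{\sigma(i)}=\epsilon_i$; symmetrically $\delta_j\epsilon_{\tau(j)}=\delta_j$ for each $j$. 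Ordering idempotents by $e\le f\iff ef=e$, we get $\epsilon_i\le\delta_{\sigma(i)}\le\epsilon_{\tau\sigma(i)}$, and orthogonality of the $\epsilon$'s forces $\tau\sigma(i)=i$; symmetrically $\sigma\tau=\mathrm{id}$, so $\sigma$ is a bijection, $l=m$, and $\epsilon_i=\delta_{\sigma(i)}$. Hence $R_i\cong\epsilon_iR=\delta_{\sigma(i)}R\cong S_{\sigma(i)}$.

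The genuinely substantive point is the finiteness of the number of indecomposable factors, i.e.\ that $\Spec R$ has finitely many connected components; for an arbitrary commutative ring this can fail, and here it rests entirely on the finiteness of $\Min R$ from Lemma~\ref{2.2b}. The connectedness of each $\V(\fp)$, the clopen/idempotent correspondence, and the lattice-theoretic bookkeeping in the uniqueness part are all routine.
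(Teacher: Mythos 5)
Your proof is correct, but it takes a genuinely different route from the paper on both halves of the statement. For existence, the paper does not touch the topology of $\Spec R$ at all: it repeatedly splits any decomposable factor as a product of two rings and observes that a decomposition into $t$ factors forces $|\Min R|\geq t$ (from the shape of primes in a product), so by Lemma \ref{2.2b} the splitting terminates after at most $|\Min R|$ steps; the factors are isoartinian by citing \cite[Lemma 4.1]{FN3}. Your version instead identifies the indecomposable factors intrinsically as the rings cut out by the connected components of $\Spec R$, using that $\Spec R$ is a finite union of the irreducible sets $\V(\fp)$, $\fp\in\Min R$ (here $R/\fp$ being a domain already suffices; the PID property from Proposition \ref{2.3c} is not needed). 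Both arguments rest on exactly the same substantive input, the finiteness of $\Min R$; yours is conceptually cleaner and exhibits the decomposition canonically, while the paper's is more elementary and avoids the clopen--idempotent dictionary. For uniqueness, the paper transports the two factorizations to two direct-sum decompositions of $R$ into indecomposable ideals and outsources the matching to \cite[Lemma 3.8]{LA}, whereas you give a self-contained primitive-idempotent argument ($\epsilon_i=\sum_j\epsilon_i\delta_j$ with pairwise orthogonal summands in the ring $\epsilon_iR$, so exactly one survives); this is essentially the content of the cited lemma, made explicit. I see no gaps: the orthogonality and primitivity checks go through in a commutative ring, the maps $\sigma,\tau$ are mutually inverse by the orthogonality of each family, and each $e_iR$ is isoartinian because its ideals are $R$-submodules of $R$ and $R$-isomorphisms between them are automatically $e_iR$-linear.
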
	

\begin{proof} First, note that the (minimal) prime ideals of $\prod_{i=1}^l R_i$ are of the form $\prod_{i=1}^l{\fp}_i$, where for some
$j$, $\fp_j$ is a (minimal) prime ideal of the ring $R_j$ and ${\fp}_i=R_i$ for all $i\neq j$.

By Lemma \ref{2.2b}, $\Min R$ is finite. Let $n$ be the number of elements of $\Min R$. If $R$ is indecomposable, then we are done.
Otherwise, suppose $R\cong R_1\times R_2$, where $R_1$ and $R_2$ are two rings. If $R_1$ and $R_2$ are indecomposable, then we are
done. Without loss of generality, assume that $R_1$ is not indecomposable, so $R_1\cong A_1\times A_2$ for some rings $A_1$ and $A_2$.
Thus, we have $R\cong A_1\times A_2\times R_2$. If $A_1$, $A_2$ and $R_2$ are all indecomposable, then we are done. Otherwise, we
repeat the process. Keeping in mind the form of prime ideals of the direct products of rings, this process must stop after at most $n$ steps.
All of the finite decomposition components are isoartinian by
\cite[Lemma 4.1]{FN3}.
	
For the second assertion, let $\phi: R_1\times \cdots \times R_l\lo R$ and $\psi:  S_1\times \cdots \times S_m\lo R$ be two ring
isomorphisms, in which $R_i$'s and $S_j$'s are indecomposable isoartinian rings and $l, m \in \Bbb N$. For each $1\leq i\leq l$
and $1\leq j\leq m$, set $A_i=\phi(0\times \cdots \times 0\times R_i\times 0\times \cdots \times 0)$ and $B_j=\psi(0\times \cdots
\times 0\times S_j\times 0\times \cdots \times 0)$. Then $$R=A_1\oplus\cdots\oplus A_l=B_1\oplus\cdots \oplus B_m$$ and each $A_i$
as well as each $B_j$ is indecomposable as an ideal, because $R_i$'s and $S_j$'s are indecomposable as rings. Now, by
\cite[Lemma 3.8]{LA}, $l=m$ and $A_i=B_{\sigma (i)}$ for some permutation $\sigma$ of the set $\{1,\dots, l\}$. It follows that
$R_i\cong S_{\sigma (i)}$ for every $i=1,\dots, l$.
\end{proof}	

Let $\Nil R$ denote the nilradical of $R$.

\begin{lemma}\label{2.7g} Let $R$ be an isoartinian ring. Then the following statements hold.
\begin{enumerate}
\item[(i)] $\Nil R$ is nilpotent.
\item[(ii)] Every two distinct minimal prime ideals of $R$ are coprime.
\end{enumerate}
\end{lemma}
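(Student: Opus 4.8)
I would prove the two assertions separately; each is a short deduction from material already in place.

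\emph{Part (i).} Since $R$ is isoartinian it satisfies the ascending chain condition on annihilator ideals \cite[Lemma 4.10(1)]{FN3}, and for a commutative ring this alone forces the nilradical to be nilpotent (the commutative case of the Herstein--Small nilpotency theorem for rings with ACC on annihilators). So I would simply invoke that. The point worth flagging is that one cannot shortcut this by saying ``a finitely generated nil ideal is nilpotent'': for isoartinian $R$ the ideal $\Nil R$ is typically \emph{not} finitely generated --- the non-Noetherian local rings of type (ii) in Theorem \ref{1.1} are exactly examples where $\Nil R=\fm$ is nilpotent but not finitely generated --- so the annihilator chain condition is genuinely doing the work, and a Noetherian-type hypothesis is neither available nor needed.

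\emph{Part (ii).} Let $\fp\neq\fq$ be distinct minimal primes of $R$. By Lemma \ref{2.2b} both belong to $\Ass R$, say $\fp=(0:_Rx)$ and $\fq=(0:_Ry)$ with $x,y\in R$ nonzero. Then $\fp\cap\fq=(0:_Rx)\cap(0:_Ry)=(0:_R(Rx+Ry))$, and the $R$-homomorphism $R\to R/(0:_Rx)\oplus R/(0:_Ry)$, $r\mapsto(rx,ry)$, exhibits $R/(\fp\cap\fq)$ as a submodule of $R/\fp\oplus R/\fq$. Now $R/\fp\cong Rx$ and $R/\fq\cong Ry$ are ideals of $R$, hence isoartinian as $R$-modules; since a finite direct sum of isoartinian modules is isoartinian and a submodule of an isoartinian module is isoartinian, $R/(\fp\cap\fq)$ is an isoartinian $R$-module, equivalently an isoartinian ring (an $R$-linear isomorphism between ideals of $R/(\fp\cap\fq)$ is automatically $R/(\fp\cap\fq)$-linear). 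It is also reduced, $\fp\cap\fq$ being an intersection of primes. By \cite[Corollary 4.8]{FN3}, $R/(\fp\cap\fq)$ is a finite direct product of principal ideal domains; and since any prime over $\fp\cap\fq$ contains $\fp$ or $\fq$ while $\fp,\fq$ are incomparable minimal primes, $R/(\fp\cap\fq)$ has exactly two minimal primes, so the product has exactly two factors, $R/(\fp\cap\fq)\cong D_1\times D_2$. The two minimal primes of $D_1\times D_2$ are the coordinate ideals $\{0\}\times D_2$ and $D_1\times\{0\}$, which are coprime; pulling back along the isomorphism gives $\fp/(\fp\cap\fq)+\fq/(\fp\cap\fq)=R/(\fp\cap\fq)$, that is, $\fp+\fq=R$.

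\emph{Where the difficulty lies.} For (ii) the single non-formal input is that $R/(\fp\cap\fq)$ is again isoartinian, i.e.\ that the class of isoartinian modules is closed under finite direct sums; quotients of isoartinian modules need \emph{not} be isoartinian, so this closure property must be invoked (or reproved) with care, after which (ii) is immediate from the structure theorem for reduced isoartinian rings. For (i) there is no real obstacle provided one is willing to cite the classical nilpotency theorem; the only thing to notice is that, because $\Nil R$ is not finitely generated in general, this citation (rather than an elementary argument) is genuinely necessary.
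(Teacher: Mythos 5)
Your part (i) is correct and differs from the paper only in the source cited: the paper invokes \cite[Proposition 2.3]{Da}, while you invoke the commutative case of the Herstein--Small theorem (nil ideals are nilpotent in a ring with ACC on annihilators), which is available here via \cite[Lemma 4.10(1)]{FN3}; your side remark that $\Nil R$ need not be finitely generated is apt.

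Part (ii) contains a genuine gap: the closure property you lean on --- that a finite direct sum of isoartinian modules is isoartinian --- is false, and it fails in exactly the kind of situation your argument is meant to handle. Let $S=k[X,Y]/(XY)$ and let $P=(\bar X)$, $Q=(\bar Y)$ be its two minimal primes. Then $S/P\cong k[Y]$ and $S/Q\cong k[X]$ are principal ideal domains, hence isoartinian as $S$-modules (their $S$-submodules are precisely their ideals, and $S$-isomorphisms between those are isomorphisms of ideals of a PID). Since $P\cap Q=0$, the diagonal map embeds $S$ itself as an $S$-submodule of $S/P\oplus S/Q$; but $S$ is not an isoartinian ring, being reduced, indecomposable and not a domain, hence not a finite product of principal ideal domains as \cite[Corollary 4.8]{FN3} would require. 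As submodules of isoartinian modules are isoartinian, $S/P\oplus S/Q$ is not an isoartinian $S$-module. Note that this happens even though $\Hom_S(S/P,S/Q)=0=\Hom_S(S/Q,S/P)$, so a Hom-orthogonality hypothesis on the summands would not rescue the step either; the obstruction is a shared simple subquotient (Goursat). The deeper problem is that your derivation of ``$R/(\fp\cap\fq)$ is isoartinian'' uses only that $R/\fp$ and $R/\fq$ are isoartinian modules, never that $R$ itself is isoartinian; the example shows this information is insufficient, and indeed the isoartinianity of $R/(\fp\cap\fq)$ is essentially equivalent (via \cite[Corollary 4.8]{FN3} and the Chinese remainder theorem) to the coprimality you are trying to prove, so the argument is circular in spirit as well as unsupported in fact. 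The paper avoids this by a different route: it applies \cite[Theorem 2.3]{BDV1} on virtually semisimple modules to the product ring $A=R/\fp\times R/\fq$, where every ideal genuinely splits as a product, and then invokes the uniqueness of indecomposable decompositions (Lemma \ref{2.6f}). Everything after your unproved step (counting the minimal primes of $R/(\fp\cap\fq)$ and reading off coprimality from a product decomposition) is fine, but the step itself is where all the content lies.
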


\begin{proof} (i) follows by \cite[Proposition 2.3]{Da}.

(ii) Let $\fp$ and $\fq$ be two distinct minimal prime ideals of $R$, and let $A:=R/\fp \times R/\fq$. By Lemma \ref{2.2b}, $\fp$ and
$\fq$ belong to $\Ass R$, and so $R/\fp$ and $R/\fq$ are principal ideal domains by Proposition \ref{2.3c}. Since $A$ is the product
of two principal ideal domains, we can conclude that every ideal of $A$ is isomorphic to a direct summand of $A$. Thus, by \cite[Theorem 2.3]{BDV1}, there exist prime ideals $P_1, \dots, P_k$ of $A$ such that $A\cong A/P_1\times \dots \times A/P_k$, and $P_i$'s are comparable
or coprime. By applying Lemma \ref{2.6f} and \cite[Lemma 4.1]{FN3}, we can conclude that $k=2$, $A/\fp\cong A/P_{\sigma (1)}$, and
$A/\fq\cong A/P_{\sigma (2)}$ for some permutation $\sigma$ of the set $\{1,2\}$. As every ring homomorphism between two quotient rings
of $A$ is an $A$-homomorphism, we conclude that $$\fp=\Ann_A(A/\fp)=\Ann_A(A/P_{\sigma (1)})=P_{\sigma (1)}.$$ Similarly, we have
$\fq=P_{\sigma (2)}$. Since $\fp$ and $\fq$ are two distinct minimal primes, they are not comparable and therefore coprime.
\end{proof}

The next result provides a simple criterion for an isoartinian ring to be indecomposable.

\begin{lemma}\label{2.8h} An isoartinian ring $R$ is indecomposable if and only if $\Min R$ is a singleton.
\end{lemma}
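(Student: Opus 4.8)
The plan is to prove the equivalence by handling the two implications separately, the ``if'' direction being essentially formal and the ``only if'' direction carrying the real content.

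For the ``if'' direction I would argue contrapositively: assume $R$ decomposes as $R\cong R_1\times R_2$ with $R_1$ and $R_2$ nonzero, and recall from the proof of Lemma~\ref{2.6f} the description of the (minimal) primes of a product, namely that $\Min(R_1\times R_2)$ consists exactly of the ideals $\fp_1\times R_2$ with $\fp_1\in\Min R_1$ together with the ideals $R_1\times\fp_2$ with $\fp_2\in\Min R_2$. Since every nonzero ring has at least one minimal prime, this forces $|\Min R|\ge 2$, so $\Min R$ is not a singleton. Contrapositively, a singleton $\Min R$ makes $R$ indecomposable; nothing delicate happens here.

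For the ``only if'' direction, suppose $R$ is indecomposable and, towards a contradiction, that $\Min R=\{\fp_1,\dots,\fp_n\}$ with $n\ge 2$, using that $\Min R$ is finite by Lemma~\ref{2.2b}. By Lemma~\ref{2.7g}(ii) the $\fp_i$ are pairwise coprime, hence so are their powers $\fp_i^k$ for each $k$, and therefore $\fp_1^k\cap\cdots\cap\fp_n^k=\fp_1^k\cdots\fp_n^k=(\fp_1\cdots\fp_n)^k=(\fp_1\cap\cdots\cap\fp_n)^k=(\Nil R)^k$. Invoking Lemma~\ref{2.7g}(i), $\Nil R$ is nilpotent, so $(\Nil R)^k=0$ for some $k$; fixing such a $k$ gives $\fp_1^k\cap\cdots\cap\fp_n^k=0$, and since the $\fp_i^k$ are pairwise coprime the Chinese Remainder Theorem yields $R\cong R/\fp_1^k\times\cdots\times R/\fp_n^k$. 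As $n\ge 2$ and each $\fp_i^k\subseteq\fp_i\subsetneq R$, this is a nontrivial product of nonzero rings, contradicting indecomposability. Hence $\Min R$ is a singleton.

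The step I expect to be the main obstacle is not any single computation but rather recognizing that pairwise coprimality of the $\fp_i$ by itself is not enough: it only decomposes $R/\Nil R$, not $R$ itself. The remedy is precisely to pass to the powers $\fp_i^k$ and exploit the nilpotence of $\Nil R$ from Lemma~\ref{2.7g}(i) to make the intersection vanish, which is what upgrades the Chinese Remainder isomorphism to a genuine decomposition of $R$. Everything else (the structure of primes in a product, the behaviour of coprimality under products and powers, the identity $\Nil R=\bigcap_{\fp\in\Min R}\fp$) is standard.
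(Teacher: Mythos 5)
Your proof is correct and follows essentially the same route as the paper's: the easy direction via the description of minimal primes of a product, and the hard direction via pairwise coprimality of the minimal primes (Lemma~\ref{2.7g}(ii)), nilpotence of $\Nil R$ (Lemma~\ref{2.7g}(i)), and the Chinese Remainder Theorem applied to the powers $\fp_i^k$. The extra care you take in justifying $\fp_1^k\cap\cdots\cap\fp_n^k=(\Nil R)^k$ is a slightly more explicit version of a step the paper leaves implicit, but the argument is the same.
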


\begin{proof} First, assume that $R$ is decomposable. Then, there is a natural number $t\geq 2$ and rings $R_1, \dots, R_t$
such that $R\cong R_1\times \dots \times R_t$. This, in particular, implies that $\Min R$ has at least $t$ elements.

Next, assume that $\Min R=\{\fp_1, \dots, \fp_t\}$, with $t\geq 2$. Note that by Lemma \ref{2.2b}, $\Min R$ is finite. By
Lemma \ref{2.7g}(ii), every two distinct elements $\fp$ and $\fq$ of $\Min R$ are coprime, and so $\fp^k+\fq^k=R$ for every
natural number $k$. By Lemma \ref{2.7g}(i), $\Nil R$ is nilpotent, and hence $\fp_1^k \cdots \fp_t^k=0$ for some natural
number $k$. Now, the Chinese remainder theorem \cite[Proposition 1.10]{AM} yields
$$\begin{array}{ll}
R&\cong R/0\\
&\cong R/(\fp_1^k \cdots \fp_t^k)\\
&\cong R/(\fp_1^k \cap \cdots \frak \cap p_t^k)\\
&\cong (R/\fp_1^k)\times \cdots \times (R/\fp_t^k),
\end{array}$$
and so $R$ is decomposable.
\end{proof}

Although any local ring is indecomposable, the indecomposable isoartinian ring $\mathbb{Z}$ shows that $\Min R$ cannot be replaced by $\Max R$ in the above lemma.

Using Proposition \ref{2.3c} and Lemma \ref{2.8h}, we can completely determine the spectrum of an isoartinian local ring.

\begin{corollary}\label{2.9i} Let $(R,\fm)$ be an isoartinian local ring. Then $\Spec R=\{\fp, \fm\}$, where $\fp$ is the unique
minimal prime ideal of $R$. Moreover if $\fm$ contains a regular element, then $\fp=\bigcap_{i=1}^\infty {\fm}^i$ and $\fp\neq
\fm$.
\end{corollary}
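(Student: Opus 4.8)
The plan is to deduce the first assertion directly from the structural results already proved, and then to obtain the ``moreover'' part by combining the principality of regular ideals with the fact that dividing a nilpotent element by a regular element keeps it nilpotent.

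First I would note that a local ring is indecomposable, so Lemma~\ref{2.8h} applies and shows that $\Min R$ is a singleton, say $\Min R=\{\fp\}$. Since $\fp$ is then the only minimal prime, $\Nil R=\fp$. Now Proposition~\ref{2.3c} gives $\dim R\le 1$ and $\Spec R=\Min R\cup\Max R=\{\fp,\fm\}$, which is the first claim. For later use I also record, via Lemma~\ref{2.2b}, that $\fp\in\Ass R$, so that $R/\fp$ is a principal ideal domain by Proposition~\ref{2.3c}.

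For the ``moreover'' part, suppose $\fm$ contains a regular element. Then $\fm$ is a regular ideal, so by Lemma~\ref{2.4d} it is generated by a regular element; write $\fm=Rz$ with $z$ regular. A regular element is never nilpotent (a nilpotent element annihilates one of its own powers, hence is a zero-divisor), so $z\notin\Nil R=\fp$ while $z\in\fm$. Thus $\fp\subsetneq\fm$; in particular $\fp\ne\fm$, as claimed. It then remains to identify $\bigcap_{i\ge1}\fm^i$ with $\fp$. For the inclusion $\bigcap_{i\ge1}\fm^i\subseteq\fp$, I would use that $R/\fp$ is a principal ideal domain that is local with maximal ideal $\fm/\fp\ne 0$, hence a DVR and in particular Noetherian; the Krull intersection theorem then gives $\bigcap_{i\ge1}(\fm/\fp)^i=0$, and since $(\fm/\fp)^i=(\fm^i+\fp)/\fp$ this forces $\bigcap_{i\ge1}(\fm^i+\fp)=\fp$, whence $\bigcap_{i\ge1}\fm^i\subseteq\fp$.

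The reverse inclusion $\fp\subseteq\bigcap_{i\ge1}\fm^i$ is the one genuine step. The idea is an induction showing that every $a\in\fp=\Nil R$ can be written $a=r_iz^i$ with $r_i$ nilpotent, for all $i\ge 0$; the case $i=0$ is trivial. Given such an expression with $r_i$ nilpotent, we have $r_i\in\Nil R=\fp\subseteq\fm=Rz$, so $r_i=r_{i+1}z$ for some $r_{i+1}\in R$ and hence $a=r_{i+1}z^{i+1}$; moreover, if $r_i^m=0$ then $r_{i+1}^mz^m=0$, and since $z^m$ is regular we conclude $r_{i+1}^m=0$, so $r_{i+1}$ is again nilpotent. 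Therefore $a\in Rz^i=\fm^i$ for every $i$, which gives $\fp\subseteq\bigcap_{i\ge1}\fm^i$ and hence equality. The only subtle point in the whole argument is precisely this induction — specifically the observation that, because the powers $z^m$ stay regular, a nilpotent element remains nilpotent after cancelling a power of $z$; everything else is routine bookkeeping with Lemmas~\ref{2.2b}, \ref{2.4d} and~\ref{2.8h} and Proposition~\ref{2.3c}.
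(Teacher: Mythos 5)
Your proof is correct, and it takes a genuinely different route to the ``moreover'' part than the paper does. The paper first shows, via Cohen's theorem applied to $R/\fm^i$ and the structure of special Artinian local rings, that there is no ideal strictly between $\fm^{i-1}$ and $\fm^i$; it then uses this to prove directly that $\fr=\bigcap_{i\ge 1}\fm^i$ is a \emph{prime} ideal, and that $\fr\neq\fm$ (else $z=rz^2$ would make the regular non-unit $z$ invertible), so that $\fr$ must be the remaining prime $\fp$. You instead prove the two inclusions separately: $\bigcap_{i\ge1}\fm^i\subseteq\fp$ by passing to the Noetherian quotient $R/\fp$ (a local principal ideal domain, by Lemma~\ref{2.2b} and Proposition~\ref{2.3c}) and invoking the Krull intersection theorem there; and $\fp\subseteq\bigcap_{i\ge1}\fm^i$ by an induction showing every element of $\fp=\Nil R$ is divisible by $z^i$ for all $i$, the key observation being that cancelling the regular element $z^m$ preserves nilpotency. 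Your argument is more elementary --- it avoids Cohen's theorem and the classification of Artinian local principal ideal rings entirely --- and it also isolates cleanly why $\fp\neq\fm$ (a regular element cannot be nilpotent). What the paper's approach buys in exchange is the extra structural fact that the ideals of $R$ containing some $\fm^i$ are exactly the powers of $\fm$, with no ideals strictly in between consecutive powers; your proof does not yield that byproduct, but it is not needed for the statement of the corollary. All the lemmas you cite are used within their hypotheses, so there is no gap.
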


\begin{proof} Since $R$ is local, it is indecomposable. Hence, Lemma \ref{2.8h} yields that $R$ has a unique minimal prime
ideal $\fp$. Proposition \ref{2.3c} implies that $\Spec R=\Min R\cup \Max R=\{\fp, \fm\}$.

Now, assume that $\fm$ contains a regular element. By Lemma \ref{2.4d}, $\fm$ is principal and is generated by a regular element
$z$. For every natural number $i$, we show that there is no ideal between ${\fm}^i$ and ${\fm}^{i-1}$. Clearly, ${\fm}/{\fm}^i$
is the unique prime ideal of the ring $R/{\fm}^i$, and ${\fm}/{\fm}^i$ is cyclic. Hence, by Cohen's theorem, $R/{\fm}^i$ is a
Noetherian ring. As $\dim R/{\fm}^i=0$, by \cite[Theorem 8.5]{AM}, we deduce that $R/{\fm}^i$ is Artinian. Hence, by
\cite[Proposition 8.8]{AM}, $R/{\fm}^i$ is a special ring, and so $R/\fm^i, \fm/\fm^i, \dots, \fm^{i-1}/\fm^i, 0$ are the only
ideals of $R/\fm^i$. Thus, there is no ideal strictly between ${\fm}^i$ and ${\fm}^{i-1}$.
	
Set ${\fr}=\bigcap_{i=1}^\infty {\fm}^i$. We show that ${\fr}$ is prime. Let $a$ and $b$ be two elements of $R\setminus \fr$.
We show that $ab\not \in \fr$. There are natural numbers $l$ and $t$ such that $a\in {\fm}^l\setminus {\fm}^{l+1}$ and
$b\in {\fm}^t\setminus {\fm}^{t+1}$. Thus, by the above argument, ${\fm}^l=Ra+{\fm}^{l+1}$ and ${\fm}^t=Rb+{\fm}^{t+1}$. So, $${\fm}^{l+t}={\fm}^l{\fm}^t=(Ra+{\fm}^{l+1}) (Rb+{\fm}^{t+1})\subseteq Rab+{\fm}^{l+t+1}.$$ Suppose that $ab\in {\fm}^{l+t+1}$.
Then ${\fm}^{l+t}={\fm}^{l+t+1}$, and so $z^{l+t}=rz^{l+t+1}$ for some $r\in R$. Since $z$ is regular, we get $rz=1$, which is
a contradiction. Consequently $ab\notin {\fm}^{l+t+1}$, and so $ab\not \in \fr$. If $\fr={\fm}$, then  $z\in \fm^2$, and so $z=rz^2$
for some $r\in R$. As $z$ is regular, we deduce that $z$ is a unit, a contradiction. Therefore $\fr=\fp$.
\end{proof}

Next, we state the following easy observation.

\begin{lemma}\label{2.10j} Let $\fp$ and ${\fq}=Rz$ be two prime ideals of a ring $R$ with ${\fp}\subsetneq \fq$. Then $\fp\fq={\fp}
z=\fp$.
\end{lemma}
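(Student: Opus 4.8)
The plan is to first strip the statement down to a single inclusion and then use that $\fp$ is prime. The equality $\fp\fq=\fp z$ is immediate, since $\fq=Rz$ gives $\fp\fq=\fp\cdot Rz=\fp z$. Likewise $\fp z\subseteq\fp$ holds automatically because $\fp$ is an ideal. So the entire content of the lemma is the reverse inclusion $\fp\subseteq\fp z$.

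To prove $\fp\subseteq\fp z$, I would pick an arbitrary $a\in\fp$. Since $\fp\subseteq\fq=Rz$, we may write $a=rz$ for some $r\in R$. The one thing to notice is that $z\notin\fp$: otherwise $\fq=Rz\subseteq\fp$, contradicting $\fp\subsetneq\fq$. Now $rz=a\in\fp$ with $\fp$ prime and $z\notin\fp$ forces $r\in\fp$, hence $a=rz\in\fp z$. Since $a\in\fp$ was arbitrary, $\fp\subseteq\fp z$, and combining with $\fp z\subseteq\fp$ yields $\fp z=\fp$; together with $\fp\fq=\fp z$ this gives the claimed $\fp\fq=\fp z=\fp$.

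The only point that requires any care — and it is a minor one — is the observation $z\notin\fp$, which is precisely where the \emph{strict} inclusion $\fp\subsetneq\fq$ enters; without strictness the conclusion fails in general. Apart from that, the argument is purely formal, using nothing beyond the definitions of a prime ideal and of the product of ideals, so there is no real obstacle to anticipate.
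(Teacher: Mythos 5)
Your argument is correct and is essentially identical to the paper's proof: both reduce to showing $\fp\subseteq\fp z$, write an element $a\in\fp\subseteq Rz$ as $a=rz$, and use primality of $\fp$ together with $z\notin\fp$ (forced by the strict inclusion) to conclude $r\in\fp$. Nothing further is needed.
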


\begin{proof} Clearly, $\fp\fq={\fp} z\subseteq \fp$. If $x\in {\fp}$, then $x=rz$ for some $r\in R$. Since $z\not \in
{\fp}$, we have $r\in {\fp}$. Hence $x=rz\in {\fp} z$, and the proof is complete.
\end{proof}	

Finally, we are ready to prove Theorem \ref{1.1}. This theorem is  analogous to the classical decomposition theorem for Artinian
rings, which was proved by Akizuki and Cohen.

\vspace{.4cm}

{\bf Proof of Theorem 1.1:} Based on Lemma \ref{2.6f} and \cite[Lemma 4.1]{FN3}, we can conclude that $R$ is isoartinian if and only
if it is uniquely isomorphic to the direct product of finitely many indecomposable isoartinian rings. Hence, to complete the proof,
it remains to establish that every indecomposable isoartinian ring $A$ takes on one of the six given forms.

Let $A$ be an indecomposable isoartinian ring. We note that $\dim A\leq 1$ by Proposition \ref{2.3c}, and as $A$ is indecomposable,
by Lemma \ref{2.8h}, $\Min A$ has a unique element $\fp$. Hence $\Nil A=\fp$, and so $\fp$ is nilpotent by Lemma \ref{2.7g}(i). The
proof is broken into two cases.

{\bf Case 1.} $\dim A=0$.

Then $\Spec A=\{\fp\}$. If $A$ is Noetherian, then $A$ is Artinian (type (i)). If $A$ is not Noetherian, then type (ii) occurs.

{\bf Case 2.} $\dim A=1$.

Lemma \ref{2.2b} implies that $\fp\in \Ass A$, and so $A/\fp$ is a principal ideal domain by Proposition \ref{2.3c}. As $\dim A=1$,
it follows that $\fp$ is not maximal, and so $A/\fp$ is not a field. If $\fp=0$, then $A$ is a non-field principal ideal domain
(type (iii)).

In the rest of the proof, we may and do assume that $\fp\neq 0$. Assume that $A$ is Noetherian. We claim that $\Min A\varsubsetneq
\Ass A$. Suppose on the contrary $\Min A=\Ass A$. Let $\fm$ be a maximal ideal of $A$. Then $\fp\subsetneq \fm$, and so $\fm$ contains
a regular element. Now, Lemma \ref{2.4d} implies that $\fm=Az$ for some regular element $z$. By Lemma \ref{2.10j}, it follows that
$\fp=\fm \fp$. By localizing at $\fm$ and applying Nakayama's lemma, it follows that $\fp A_{\fm}=0$. Thus $\fp A_{\fn}=0$ for every
maximal ideal $\fn$ of $A$, and so $\fp=0$. We have arrived at a contradiction, and so type (iv) occurs.

Next, suppose that $A$ is not Noetherian. Then either type (v) or type (vi) occurs.   \hspace{1.5cm} $\square$

\section{Examples}

In this section, we present examples that demonstrate the occurrence of types (ii), (iv), (v), and (vi) stated in Theorem \ref{1.1}.
These examples are provided in Examples \ref{3.1a}, \ref{3.3c}, \ref{3.4d}, and \ref{3.7g}. Furthermore, we illustrate
with some examples that the condition ``isoartinian" in types (ii), (v) and (vi) of Theorem \ref{1.1} cannot be relaxed; see Examples
\ref{3.2b}, \ref{3.5e} and \ref{3.8h}.

Specifically, our first example demonstrates the occurrence of type (ii) in Theorem \ref{1.1} and provides an instance of an
isoartinian ring that is not isonoetherian.

\begin{example}\label{3.1a} Let $F$ be a field and $R=F[x_1,x_2,x_3,\dots]$, where $x_ix_j=0$ for each $i,j\in \Bbb N$. Clearly, $R$
is a local ring with the unique prime ideal ${\fm}=\langle x_1, x_2, \dots \rangle$. Since $\fm^2=0$, we can deduce that $R$ can be
expressed as $R=F\oplus(\oplus_{i\in \mathbb{N}} Fx_i)$. Thus, $R$ is an $F$-vector space with a countable basis and any ideal $I$
of $R$ can be written as $F^{(\Gamma)}$, where $\Gamma$ is a countable set.

Now, consider a descending chain of ideals of $R$: $$I_1\supseteq I_2\supseteq \cdots \supseteq I_n \supseteq \cdots .$$ If there
exists an $I_j$ isomorphic to $F^{(\Gamma)}$ with $\Gamma$ finite, then the chain is stationary. If there is no such ideal $I_j$,
then we must have $I_1\cong I_2\cong \cdots$. This shows that $R$ is an isoartinian ring.

However, $R$ is not isonoetherian. To see this, for each natural number $k$, let $J_k:=\langle x_1, x_2, \dots, x_k\rangle$. Then $J_k$ is an $F$-vector space of dimension $k$, and the chain $$J_1\subseteq J_2\subseteq \cdots \subseteq J_k\subseteq \cdots$$ does not stabilize up to isomorphism. Hence, $R$ is not isonoetherian and, consequently, not Noetherian.
\end{example}

We now present an example of a non-isoartinian local ring with a nilpotent maximal ideal.

\begin{example}\label{3.2b} Let $F$ be a field and $R=F[x_1,x_2,x_3,\dots]$, where $x_ix_j=0$ and $x_i^3=0$ for all natural numbers $i<j$. Clearly, $R$ is a non-Noetherian local ring with the unique prime ideal ${\fm}=\langle x_1, x_2, \dots \rangle$ and $\fm^3=0$. For each
natural number $k$, set $J_k:=\langle x_{k}, x_{k+1}, \dots \rangle$. These ideals form the following descending chain: $$J_1\supsetneq J_2\supsetneq \cdots \supsetneq J_k\supsetneq \cdots .$$ It is easy to see that $x_k\in (0:_RJ_{k+1})\setminus (0:_RJ_{k})$. Since isomorphic modules have the same annihilator, this chain does not stabilize up to isomorphism.
Thus, $R$ satisfies all conditions in type (ii) of Theorem \ref{1.1} except the isoartinian condition.
\end{example}

Let $J(R)$ denote the Jacobson radical of $R$. Recall that a ring $R$ is said to be {\it perfect} if $R/J(R)$ is a semisimple ring
and $J(R)$ is $t$-nilpotent. A ring $R$ is called {\it subperfect} if its total ring of fractions is a perfect ring. An $R$-module
$M$ is called isoartinian if every descending chain of submodules of $M$ terminates up to isomorphism.

Next, we provide an example of rings of type (iv) in Theorem \ref{1.1}. It is well-known that every Artinian ring is perfect. This raises
the question of whether every isoartinian ring is also perfect. However, the following example shows that an isoartinian ring may not even
be subperfect. Furthermore, as every Artinian ring is Cohen-Macaulay, one may conjecture that any Noetherian isoartinian ring is also Cohen-Macaulay. Nevertheless, the next example demonstrates that this is not true.

\begin{example}\label{3.3c} Let $F$ be a field and $R=F[[X,Y]]/\langle XY,Y^2 \rangle$. Let $x$ and $y$ denote the residue classes of $X$
and $Y$ in $R$, and set $\fm=\langle x,y \rangle$ and $\fp=Ry$. As $0=\langle x,y^2 \rangle \cap \fp$ is a minimal primary decomposition
of the zero ideal of $R$, it follows that $\Ass R=\{\fp,\fm\}$, and so $\Min R\subsetneq \Ass R$. It is easy to verify that $(0:_Ry)=\fm$,
and so $\fp$ is a simple $R$-module. We prove the following claim:

{\bf Claim.} Let $J$ be an ideal of $R$ such that $J\cap \fp=0$. Then $J$ is an isoartinian $R$-module.

{\bf Proof.} As $R/\fp$ is a principal ideal domain and $(J+\fp)/\fp$ is an ideal of $R/\fp$, it follows that $(J+\fp)/\fp$ is an
isoartinian $R/\fp$-module. Thus, $(J+\fp)/\fp$ is also isoartinian as an $R$-module. Note that the set of $R$-submodules of
$(J+\fp)/\fp$ coincides with the set of its $R/\fp$-submodules, and every $R/\fp$-homomorphism between two ideals of $R/\fp$ is also
an $R$-homomorphism. Since $J\cong J/J\cap \fp\cong (J+\fp)/\fp$, we deduce that $J$ is an isoartinian $R$-module.

Next, we show that $R$ is isoartinian. Let $$I_1\supseteq I_2\supseteq \cdots \supseteq I_n \supseteq  \cdots$$ be a descending chain
of ideals of $R$. Assume that $I_{\ell}\cap \fp=0$ for some natural number $\ell$. Then $I_{\ell}$ is an isoartinian $R$-module by
the above claim. This implies that the chain must be stationary up to isomorphism. Now, assume that $I_{\ell}\cap \fp=\fp$ for all
natural numbers $\ell$. Since $\fm=\fp\oplus Rx$, we can easily see that $I_{\ell}=\fp \oplus (I_{\ell}\cap Rx)$. By the above claim,
$I_{\ell}\cap Rx$ is an isoartinian $R$-module for all $\ell\in \mathbb{N}$. As $I_1\cap Rx$ is an isoartinian $R$-module, there is a
natural number $k$ and $R$-isomorphisms $\phi_i:I_{k}\cap Rx\lo I_{k+i}\cap Rx$ for all $i\geq 1$. In view of the expression for
$I_{\ell}$, $\phi_i$ can be extended to an $R$-isomorphism $\psi_i:I_{k}\lo I_{k+i}$ for all $i\geq 1$. Hence, the chain must be
stationary up to isomorphism, which implies that $R$ is isoartinian.

Since $\Min R\subsetneq \Ass R$, \cite[Lemma 2.2]{FO} implies that $R$ is not subperfect. Additionally, since $\Min R\subsetneq \Ass R$,
we deduce that $R$ is not Cohen-Macaulay.
\end{example}

We follow by giving an example of rings of type (v) in Theorem \ref{1.1}.

\begin{example}\label{3.4d} Let $F$ be a field and $R$ be the quotient of the algebra $F[[x_1,x_2,\dots]]$ modulo the relations:
$$\begin{cases}
x_1x_i=0 \  \  \  \ \text{for all}\ i\geq 2\\
x_ix_j=0 \ \  \ \ \text{for all}\  i\geq 2\ \text{and}\ j\geq 2.
\end{cases}
$$
Set $\fp=\langle x_2, x_3, \dots \rangle$ and $\fm=\langle x_1, x_2, \dots \rangle$. Clearly, $\fp^2=0$. It follows that $\fp$ is
the unique minimal prime ideal of $R$, and $\fm$ is the unique maximal ideal of $R$. It is easy to check that $\fm=Rx_1\oplus \fp$,
$(0:_Rx_1)=\fp$, $(0:_Rx_2)=\fm$, and $\text{Soc}(R)=\fp$. Hence, $\Min R\subsetneq \Ass R$. We have $\fm=Rx_1\oplus(\oplus_{i>1}Fx_i)$,
and $Rx_1\cong R/\fp$ is a principal ideal domain. Thus, by the equivalence of the conditions 3(ii) and 4(iii) of \cite[Theorem 4.5]{BDV1},
there exists $0\neq x\in R$ such that every proper ideal of $R$ is semisimple or is isomorphic to $Rx\oplus J$, where $J\subseteq
\text{Soc}(R)$. Every semisimple ideal of $R$ is contained in $\text{Soc}(R)$, which is an $F$-vector space with a countable basis.
Thus, by an argument similar to that given in Example \ref{3.1a}, we can show that $R$ is isoartinian. Clearly, $R$ is not Noetherian.
\end{example}

In the following example, we demonstrate that the condition of being ``isoartinian" in type (v) of Theorem \ref{1.1} cannot be
relaxed.

\begin{example}\label{3.5e} Let $F$ be a field and $R$ be the quotient of the algebra $F[[x,y_1,y_2,\dots]]$ modulo the relations:
$$\begin{cases}
xy_i=0 \  \  \  \ \text{for all}\ i\geq 1\\
y_iy_j=0 \ \  \ \text{for all}\ j>i \\
y_i^3=0\ \  \ \ \  \text{for all}\ i\geq 2\\
y_1^2=0.
\end{cases}
$$
Set $\fp=\langle y_1,y_2,\dots\rangle$ and $\fm=\langle x,y_1,y_2,\dots\rangle$. As $\fp^3=0$, it follows that $\fp$ is the unique
minimal prime ideal of $R$. Since $R/\fp\cong \Bbb F[[x]]$ is a DVR, we deduce that $\dim R=1$. It is easy to check that $\fp=(0:_Rx)$
and $\fm=(0:_Ry_1)$, and so $\Min R\subsetneq \Ass R$. Clearly, $R$ is not Noetherian.

We will now show that $R$ is not isoartinian. To do this, assume that $J_k=\langle y_{\scalebox{0.5}{2k}},y_{\scalebox{0.5}{2k+1}},\dots
\rangle$, and consider the following descending chain of ideals in $R$:
$$J_1\supsetneq  J_2\supsetneq  J_3\supsetneq \cdots .$$
Suppose $J_{k}\cong J_{k+1}$ for some $k\in \Bbb N$. It is clear that $y_{\scalebox{0.5}{2k+1}}\in (0:_RJ_{k+1})$, but $y_{\scalebox{0.5}{2k+1}}\not\in (0:_RJ_{k})$. This contradicts the fact that isomorphic modules have the same annihilator. Thus, $R$
satisfies all conditions in type (v) of Theorem \ref{1.1} except for being isoartinian.
\end{example}

Our next example shows that type (vi) in Theorem \ref{1.1} can occur. To present it, we need the following lemma.

We recall that a ring $R$ is a {\it chain ring} if the set of ideals of $R$ is totally ordered with respect to inclusion
(this is equivalent to the condition that the set of principal ideals of $R$ is totally ordered with respect to inclusion).
It is well-known that a chain ring is local. A chain domain is called a {\it valuation domain}.

\begin{lemma}\label{3.6f} Let $A$ be a DVR with the maximal ideal $Az$. Let $K$ be the field of fractions of $A$, and set
$R=A+xK[[x]]$, $\fp=xK[[x]]$, and $\fm=Az+\fp$. Then
\begin{enumerate}
\item[(i)] $R$ is a valuation domain.
\item[(ii)] $R$ is a non-Noetherian domain, $\fm=Rz$, and $\dim R=2$.
\item[(iii)] $\Spec R=\{0,\fp,\fm\}$.
\item[(iv)] $\bigcap_{i=1}^\infty {\fm}^i=\fp$.
\end{enumerate}
\end{lemma}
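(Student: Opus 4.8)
The plan is to verify the four statements by analyzing the structure of $R = A + xK[[x]]$ as a subring of the valuation domain $K[[x]]$, where $K$ is the field of fractions of the DVR $A$. First I would establish part (i). Every element $f \in K[[x]]$ with nonzero constant term $c$ can be written as $c(1 + xg)$ with $g \in K[[x]]$; since $1 + xg$ is a unit of $K[[x]]$ lying in $R$ (its constant term is $1 \in A$), the non-units of $R$ are exactly those $f$ whose constant term lies in $Az$, i.e.\ $\fm = Az + \fp$ is precisely the set of non-units, so $R$ is local with maximal ideal $\fm$. To see $R$ is a chain ring, take $f, g \in R$ nonzero and compare them: write $f = z^i u$ and $g = z^j v$ inside $K[[x]]$-terms if both have nonzero constant term (using that $A$ is a DVR and $K$ a field, so up to the $x$-part one compares powers of $z$); the genuinely new point is comparing an element of $\fp = xK[[x]]$ with an element of $\fm \setminus \fp$: any $h \in \fp$ equals $x k$ for some $k \in K[[x]]$, and since $z$ is a non-unit of $A$ with $1/z \in K$, one has $h = z \cdot (x k / z)$ with $xk/z \in \fp \subseteq R$, so $\fp \subseteq Rz$ and more generally every element of $\fp$ is divisible in $R$ by every element of $\fm \setminus \fp$. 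From divisibility one concludes the principal ideals are totally ordered, hence $R$ is a chain ring; being a domain, it is a valuation domain.

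For part (ii): $R$ is not Noetherian because $\fp = xK[[x]]$ is not finitely generated over $R$ — indeed $K$ is not finitely generated as an $A$-module (it is the fraction field of a non-field DVR), and $\fp/x\fp \cong K$ as an $R/\fm$-... more directly, a finite set $x k_1, \dots, x k_n$ generates over $R$ only the submodule $x(Rk_1 + \dots + Rk_n)$, whose "coefficients" lie in a finitely generated $A$-submodule of $K$, which cannot be all of $K$. The identity $\fm = Rz$ follows from the computation in part (i): $z \in \fm$, and conversely $Az \subseteq Rz$ trivially while $\fp \subseteq Rz$ was shown above, so $\fm = Az + \fp \subseteq Rz \subseteq \fm$. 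Finally $\dim R = 2$ will follow once part (iii) is established, since $0 \subsetneq \fp \subsetneq \fm$ is then a maximal chain.

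For part (iii): I must show the only primes are $0$, $\fp$, $\fm$. Since $R$ is a chain ring, its primes are totally ordered, and $\fm$ is the unique maximal ideal, while $0$ is prime as $R$ is a domain. It remains to show $\fp$ is prime and that there is no prime strictly between $0$ and $\fp$ nor strictly between $\fp$ and $\fm$. Primeness of $\fp$: if $fg \in \fp = xK[[x]]$ then $fg$ has zero constant term, so one of $f, g$ has zero constant term (the constant term is multiplicative and $K$ is a field), i.e.\ lies in $\fp$. No prime between $\fp$ and $\fm$: $R/\fp \cong A$, a DVR, which has only the primes $0$ and $Az$, corresponding to $\fp$ and $\fm$. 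No prime $\fq$ with $0 \subsetneq \fq \subsetneq \fp$: any nonzero $h \in \fq$ has the form $x^m(c + \dots)$ with $c \in K^\times$, and then $x \in \fp$ satisfies $x^{N} \in Rh$ for suitable... the cleanest route is to note $R_{\fp}$ localizes $\fp$ to the maximal ideal, and $R_\fp$ — I claim $R_{\fp} = K[[x]]_{(x)} = K[x]_{(x)}$'s completion, a DVR — so $\fp R_{\fp}$ is the unique nonzero prime of $R_{\fp}$, forcing no prime strictly below $\fp$. Concretely: for $0 \neq h \in \fp$ write $h = x^m w$ with $w \in K[[x]]$ a unit; then $x^m = h w^{-1}$, and since $x = z \cdot (x/z)$ with $x/z \in \fp$, one gets $x \in \rad(Rh)$, so any prime containing a nonzero element of $\fp$ contains $x$, hence contains all of $\fp$; thus the only prime below $\fp$ is $0$.

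For part (iv): I would show $\bigcap_{i} \fm^i = \fp$ using $\fm = Rz$ from part (ii), so $\fm^i = Rz^i$. For the inclusion $\fp \subseteq \bigcap_i Rz^i$: given $h = xk \in \fp$, iterate the identity $h = z(xk/z)$ — at each stage $xk/z^i \in xK[[x]] = \fp \subseteq R$ since $z^{-i} \in K$ — to get $h \in Rz^i$ for all $i$. For the reverse inclusion $\bigcap_i Rz^i \subseteq \fp$: if $f \in R$ has nonzero constant term $c \in A$ and $f \in Rz^i$ for all $i$, then comparing constant terms (the constant term of $Rz^i$-elements lies in $Az^i$) forces $c \in \bigcap_i Az^i = 0$ by the Krull intersection theorem in the DVR $A$, contradiction; so any $f \in \bigcap_i Rz^i$ has zero constant term, i.e.\ lies in $\fp$.

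The main obstacle I anticipate is part (iii), specifically ruling out primes strictly between $0$ and $\fp$: this requires showing that every nonzero element of $\fp$ has $x$ (equivalently, a generator of $\fp$'s radical) in its radical inside $R$, which hinges on the slightly delicate fact that dividing an element of $xK[[x]]$ by $z$ keeps it inside $R$ because the coefficients already live in the field $K$. The other three parts are essentially bookkeeping with the valuation structure once part (i) is in hand.
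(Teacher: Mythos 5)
Your proposal is correct in substance but takes a noticeably more self-contained route than the paper for parts (ii)--(iv). Part (i) is essentially the paper's argument: both reduce divisibility in $R$ to divisibility in $K[[x]]$ and then use that $A$ is a valuation domain to decide whether the constant term of the quotient or of its inverse lies in $A$. After that the proofs diverge. The paper imports $\dim R=2$ and the locality of $R$ from Dobbs--Khalis, deduces non-Noetherianity from Krull's principal ideal theorem, obtains (iii) by counting primes against $\dim R=2$, and proves (iv) via a lemma of Fuchs--Salce on powers of ideals in chain rings together with Lemma \ref{2.5e} and Nakayama. You instead compute everything by hand: $\fp$ is not finitely generated because the leading coefficients of any finite generating set span only a finitely generated $A$-submodule of $K$; the spectrum is determined directly ($R/\fp\cong A$ handles primes above $\fp$, and the localization $R_\fp=K[[x]]$ handles primes below); $\dim R=2$ then falls out of (iii) rather than feeding into it; and (iv) becomes a constant-term computation reducing to $\bigcap_i Az^i=0$ in the DVR $A$. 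Your version avoids all three external citations at the cost of a longer verification; the paper's is shorter but leans on \cite{DK} and \cite{LS}.

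One step in your part (iii) needs repair. From $h=x^mw$ with $w$ a unit of $K[[x]]$ you write $x^m=hw^{-1}$ and conclude $x\in\rad(Rh)$, but $w^{-1}$ need not lie in $R$: its constant term is $\lambda^{-1}$ for some $\lambda\in K^{\times}$, and only one of $\lambda,\lambda^{-1}$ is guaranteed to be in $A$. The fix is immediate: $x^{m+1}=h\cdot(xw^{-1})$ and $xw^{-1}\in xK[[x]]=\fp\subseteq R$, so $x^{m+1}\in Rh$ and any prime containing $h$ contains $x$. You then still need that a prime $\fq$ containing $x$ contains all of $\fp$; for $xk\in\fp$ note $(xk)^2=x\cdot(xk^2)\in Rx\subseteq\fq$, so primeness gives $xk\in\fq$. (Alternatively, your localization remark already suffices once you verify $R_\fp=K[[x]]$.) The aside ``$x=z\cdot(x/z)$'' is irrelevant at that point and should be deleted; it belongs to the proof that $\fp\subseteq Rz$, where you use it correctly.
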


\begin{proof} (i) Since $R$ is a subring of $K[[x]]$, it follows that $R$ is a domain. Let $f$ and $g$ be two nonzero elements
of $R$. As $K[[x]]$ is a DVR, without loss of generality, we may assume that $g|f$ in $K[[x]]$, that is $f=hg$ for some
$h\in K[[x]]$. We want to show that $f|g$ or $g|f$ in $R$. Let $h=h_0+h_1x+h_2x^2+\cdots$. If $h_0=0$, then
$$h=x\left(h_1+h_2x+\cdots \right)\in xK[[x]]\subseteq R,$$ and so $g|f$ in $R$, and we are done. Next, assume that $h_0\neq 0$.
Since $A$ is a valuation domain, we have either $h_0\in A$ or $h^{-1}_0\in A$. We break the proof into two cases:

\noindent \textbf{Case 1}. $h_0\in A$. Then $h=h_0+x\left(h_1+h_2x+\cdots \right)\in R$, and so $g|f$ in $A$, and we are done
again.

\noindent \textbf{Case 2}. $h^{-1}_0\in A$. As $h_0$ is nonzero, $h$ is an invertible element in $K[[x]]$. Clearly, $h^{-1}$ has the form $$h^{-1}=h^{-1}_0+l_1x+l_2x^2+\dots =h^{-1}_0+x(l_1+l_2x+\cdots )\in R.$$ So, from $h^{-1}f=g$, we deduce that $f|g$ in $R$. This means that $R$ is a valuation domain.

(ii) By \cite[Corollary 14(a)]{DK}, we conclude that $\dim R=2$. By \cite[Proposition 6]{DK}, $\fm$ is the unique maximal ideal of $R$.
Since $R$ is a valuation domain and $z\notin \fp$, it follows that $\fp\subseteq Rz$. Hence, $\fm=Az+\fp\subseteq Rz$, and so $\fm=Rz$.
If $R$ were Noetherian, then Krull's principal ideal theorem would imply that $\dim R=\Ht \fm\leq 1$. So, $R$ is non-Noetherian.

(iii) Obviously, $\fp$ is a prime ideal of $R$. Suppose that $R$ possesses a prime ideal $\fq$ other than the prime ideals $0$, $\fp$,
and $\fm$. As $R$ is a valuation domain, either $\fp\subsetneq \fq$ or $\fq\subsetneq \fp$. Thus, we can conclude that either the chain $0\subsetneq \fp\subsetneq \fq\subsetneq \fm$ or the chain $0\subsetneq \fq\subsetneq \fp\subsetneq \fm$ exists, implying that
$\dim R\geq 3$. Since, by (ii), $\dim R=2$, we deduce that $\Spec R=\{0,\fp,\fm\}$.

(iv) For a proper ideal $I$ of a chain ring $T$, by \cite[Lemma 1.3, Chapter II]{LS}, either some power of $I$ is zero or
$\bigcap_{i=1}^\infty I^i$ is a prime ideal of $T$. Since $R$ is a domain and $\fm\neq 0$, no power of $\fm$ is zero, and so
$\bigcap_{i=1}^\infty {\fm}^i$ is a prime ideal of $R$. As $R$ is non-Noetherian, Lemma \ref{2.5e} asserts that
$\bigcap_{i=1}^\infty {\fm}^i\neq 0$. If $\bigcap_{i=1}^\infty {\fm}^i=\fm$, then $\fm=\fm^2$, which implies that $\fm=0$ by
Nakayama's lemma. Thus, $\bigcap_{i=1}^\infty {\fm}^i=\fp$.
\end{proof}

\begin{example}\label{3.7g} Let $A$, $K$, $R$, $\fp$, and $\fm=Rz$ be as in the lemma above. By Lemma \ref{3.6f}, $R$
is a non-Noetherian valuation domain of dimension two, and $\bigcap_{i=1}^\infty {\fm}^i=\fp$. Set $I=Rx^2$, $D=R/I$
and $\bar{\fp}=\fp/I$. Since every factor of a chain ring is again a chain ring, the ring $D$ is a chain ring as well.
As $\Spec R=\{0,\fp,\fm\}$, it follows that $\Spec D=\{\bar{\fp},\fm/I\}$. Lemma \ref{2.10j} yields that $\fm \fp=\fp$.
Since $\fp$ is nonzero, by Nakayama's lemma, it follows that $\fp$ is not finitely generated. This implies that the ideal
$\bar{\fp}$ is also not finitely generated.

We claim that $D$ is an isoartinian ring. Since $z+I$ is a regular element of $D$, we have that $\fm/I$ is not an
associated prime ideal of $D$. On the other hand, we can verify that $\bar{\fp}=(0:_D(x+I))$, and so $\bar{\fp}\in \Ass D$.
Hence, $\Min D=\Ass D$. Suppose that we have a decreasing chain of ideals $$I_1\supseteq I_2\supseteq I_3\supseteq \cdots$$ in
$D$. Assume that $\bar{\fp}\subsetneq I_j$ for every $j\in \mathbb{N}$. Since $R/\fp\cong A$ is a DVR, it follows that
$\{0\}\cup \{\fm^i/\fp \mid i\in \mathbb{N}_0\}$ is the set of all ideals of $R/\fp$. It can be easily checked that each
$I_j$ is a power of the maximal ideal $\fm/I$, and hence they are principal and generated by a power of the regular element
$z+I$. Therefore, they are isomorphic to $D$, and the chain stops up to isomorphism.

Now, assume that there is a natural number $j_1$ such that $I_{j_1}\subseteq \bar{\fp}$. If $I_j=\bar{\fp}$ for all
$j\geq j_1$, then we are done. So, without loss of generality, we may assume that $I_{j_1}\subsetneq \bar{\fp}$. Let
$\bar{y}\in \bar{\fp}\setminus I_{j_1}$. Then $I_{j_1}\subsetneq D\bar{y}\subsetneq \bar{\fp}$, because $D$ is a chain
ring. As $\bar{\fp}\bar{y}=0$, we see that $D\bar{y}$ is a well-defined $D/\bar{\fp}$-module. Note that
$D/\bar{\fp}\cong R/\fp$ is a DVR. Thus, by the structure theorem for finitely generated modules over a principal ideal
domain, every decreasing chain of submodules of the $D/\bar{\fp}$-module $D\bar{y}$ must stop up to isomorphism after a
finite number of steps. This forces the chain to stop up to isomorphism. Note that $D$-submodules of $D\bar{y}$ coincide
with its $D/\bar{\fp}$-submodules.
\end{example}

Our last example below shows that the condition of being ``isoartinian" in type (vi) of Theorem \ref{1.1} cannot be relaxed.

\begin{example}\label{3.8h} Let $R$ be the quotient of the algebra $\Bbb Z[x_1,x_2,\dots]$ modulo the relations:
$$\begin{cases}
x_ix_j=0 \ \  \ \text{for all}\ j>i \\
x_i^3=0\ \  \ \ \ \  \text{for all}\ i\geq 1.
\end{cases}
$$
Set $\fp=\langle x_1,x_2,\dots \rangle$. As $\fp^3=0$, it follows that $\fp$ is the unique minimal prime ideal of $R$. It is easy
to see that $\fp=(0:_Rx_1^2)$, so $\fp\in \Ass R$. On the other hand, it is straightforward to check that the zero ideal of $R$ is
$\fp$-primary. Thus, $Z(R)=\fp$, which implies that $\Ass R=\{\fp\}=\Min R.$ Clearly, $R$ is not Noetherian. Let $J_k=\langle x_{\scalebox{0.5}{2k}}, x_{\scalebox{0.5}{2k+1}},\dots \rangle$ for all $k\in \mathbb{N}$. With the same argument as in Example
\ref{3.5e}, we see that the following descending chain of ideals of $R$ does not terminate up to isomorphism:
$$J_1\supsetneq  J_2\supsetneq  J_3\supsetneq \cdots .$$
Hence, $R$ satisfies all conditions in type (vi) of Theorem \ref{1.1} except for being isoartinian.
\end{example}

We have not provided an example to show that the condition of being ``isoartinian" in type (iv) of Theorem \ref{1.1} cannot be relaxed.
In fact, we conclude the paper by proposing the following question:

\begin{question}\label{3.9i} Let $R$ be a one-dimensional Noetherian ring with a unique minimal prime ideal $\fp$ such that $R/\fp$
is a principal ideal domain and $\Min R\subsetneq \Ass R$. Is $R$ necessarily isoartinian?
\end{question}

\section*{Acknowledgments}

We would like to thank the anonymous referees for their comments, which helped improve the paper's presentation.

\end{document}